\documentclass[11pt]{article}
\usepackage{amsfonts}
\usepackage{color,xcolor}
\usepackage{amsmath}
\usepackage{latexsym,amsfonts,amssymb}
\usepackage{mathrsfs}
 \usepackage[pagewise]{lineno}

\setlength{\arraycolsep}{0cm}
\setlength{\parskip}{0.1cm}
\setlength{\parindent}{2em}
\setlength{\oddsidemargin}{0cm}
\setlength{\evensidemargin}{0cm}
\setlength{\textwidth}{16.0cm}

\setlength{\topmargin}{-0.90cm}

\setlength{\headheight}{0.55cm}

\setlength{\headsep}{0.45cm}

\setlength{\footskip}{0.90cm}

\setlength{\textheight}{22.0cm}

\usepackage{amsthm,cite}

\newtheorem{thm}{\bf Theorem}[section]
\newtheorem{lem}[thm]{\bf Lemma}
\newtheorem{prop}[thm]{\bf Proposition}
\newtheorem{cor}[thm]{\bf Corollary}
\usepackage[hidelinks,colorlinks=true,linkcolor=black,citecolor=black]{hyperref} 
\newtheoremstyle{REMARK}{3pt}{3pt}{\rm}{}{\bf}{.}{0.5em}{}
\theoremstyle{REMARK}\newtheorem{rem}[thm]{Remark}
\newtheorem*{rem*}{Remark} \newtheorem*{ac*}{Acknowledgments}

\usepackage{CJKutf8}

\begin{document}
\begin{CJK}{UTF8}{gkai}

\title{Analysis of a nonlinear free boundary problem modeling the radial growth
 of two-layer tumors}

\author{Junde Wu$^\dagger$, \;\;Hao Xu$^\dagger$ and
   Yuehong Zhuang$^\ddagger$\footnote{Corresponding author. \newline
   \qquad E-mail: wujund@suda.edu.cn (J. Wu),    
   1158637462@qq.com (Hao Xu), zdzyh@outlook.com(Yuehong Zhuang)}
}

\date{$^\dagger$Department of Mathematics, Soochow University,
  \\ [0.1 cm]
  Suzhou,   Jiangsu 215006, PR China,
  \\ [0.2 cm]
  $^\ddagger$Department of Mathematics, Jinan University,
  \\ [0.1 cm]
  Guangzhou, Guangdong 510632, PR China.
  }
\medskip

\maketitle
\begin{abstract} 
In this paper we study a nonlinear free boundary problem on the radial growth of a two-layer solid tumor with a quiescent core. The tumor surface and its inner interface separating the proliferating cells and the quiescent cells are both free boundaries. By deeply analyzing their relationship and employing the maximum principle, we show this problem is globally well-posed and prove the existence of a unique positive threshold $\sigma^*$ such that the problem admits a unique stationary solution with a quiescent core if and only if the externally supplied nutrient $\bar\sigma> \sigma^*$. The stationary solution is globally asymptotically stable. The formation of the quiescent core and its interesting connection with the necrotic core are also given. 

\smallskip

{\bf 2020 Mathematics Subject Classification}:  35B40;  35R35;  35Q92

\smallskip

{\bf Keywords}:  free boundary problem; stationary solution;
two-layer tumor; stability

\medskip

\end{abstract}

\section{Introduction}
\setcounter{equation}{0}
\hskip 1em


In this paper we study a free boundary problem modeling the  growth of a radially symmetric tumor with a layer of proliferating cells surrounding a central core full of quiescent cells. The model reads as follows:
\begin{equation}\label{1.1}
\left\{
\begin{array}{l}
  \Delta_r \sigma = f(\sigma)H(\sigma-\sigma_Q)+h(\sigma)
  \big(1-H(\sigma-\sigma_Q)\big)\qquad\mbox{for}\;\; 0<r<R(t),\;\;t>0,
\\ [0.3 cm]
  \sigma_r(0,t)=0,\quad \sigma(R(t),t)=\bar\sigma \qquad \mbox{for}\;\;t>0,
\\[0.3 cm]
  \displaystyle
  R^2(t){\frac{dR(t)}{dt}}=\int_{\sigma(r,t)>\sigma_Q} g(\sigma(r,t))r^2dr
  -\int_{\sigma(r,t)\le\sigma_Q}\nu r^2dr \qquad\mbox{for}\;\;t>0,
\\ [0.3 cm]
  R(0)=R_0,
\end{array}
\right.
\end{equation}
  where $\sigma=\sigma(r, t)$ is the unknown concentration of the nutrient distributing in the tumor region and $R(t)$ is the unknown tumor radius over time $t>0$, with initial radius given by $R_0>0$.
  The radial Laplacian in $\mathbb{R}^3$ is denoted by $\Delta_r={\frac{d^2}{dr^2}}+{\frac2r}{\frac{d}{dr}}$ with $r=|x|$ for $x\in\mathbb{R}^3$, and the Heaviside function $H(\cdot)$ is defined by $H(s)=1$ for $s>0$ and $H(s)=0$ for $s\le 0$.
  In this model the tumor is supposed to have two layers at some stage of tumor evolution, and we  assume to distinguish the different cell layers of the tumor tissue by a critical constant $\sigma_Q>0$. This constant represents a threshold value of nutrient concentration for the tumor cells transitioning between the proliferating phase and the quiescent phase. 
  In this setting, the region $\{\sigma(r, t)>\sigma_Q\}$ is occupied by the proliferating cells, and the region $\{\sigma(r, t)\le\sigma_Q\}$ consists of all quiescent cells which, in a certain time span, may form a quiescent core. 
  The tumor is supplied by the nutrients from the external micro-environment via its surface at a constant level $\bar\sigma>0$, which gives $\sigma=\bar\sigma$ on $r=R(t)$. 
  The first equation of \eqref{1.1} accounts for the diffusion of nutrients and their net consumption by proliferating cells and quiescent cells at the rates of
  $f(\sigma)$ and $h(\sigma)$, respectively, where $f$ and $h$ are given functions satisfying some biological assumptions. 
  The third line of \eqref{1.1} stands for the evolution of the tumor's volume over time, where $g$ is another given function, which is usually referred to as \textit{proliferation rate function} for the tumor cells, and $\nu>0$ is the constant removal rate of quiescent cells. Typically, $f$, $h$ and $g$ are given by linear functions, cf. \cite{byr-cha-96,cui-fri-01,fri-rei-99},  with the following form:
\begin{equation}\label{1.2}
f(\sigma)=\lambda_1\sigma,\qquad h(\sigma)=\lambda_2\sigma,\qquad  
g(\sigma)=\mu(\sigma-\tilde\sigma)
\end{equation}
with positive constants $\mu,\tilde\sigma,\lambda_1,\lambda_2$ fulfilling $\lambda_1\ge\lambda_2$. 
%
  In this paper we intend to investigate the essential characteristics of the growth of the two-layer tumors upon more general biological parameter rate functions, so we shall always assume the following conditions (A1)--(A3) hold:
\vspace{0.3em}

 \noindent $(\rm A1)$  
 $f$ and $h$ are both strictly increasing $C^1$ functions on $[\sigma_0,+\infty)$ with bounded derivatives, which also satisfy $f(\sigma_0)=h(\sigma_0)=0$ 
 and $f(\sigma_Q)\ge h(\sigma_Q)$.
 \vspace{0.1em}
  
 \noindent $(\rm A2)$  
 $g$ is a strictly increasing $C^1$ function on $[\sigma_0,+\infty)$ with a unique $\tilde \sigma$ such that $g(\tilde\sigma)=0$.
 
  \vspace{0.1em}

\noindent $(\rm A3)$ $0\le \sigma_0<\sigma_Q<\tilde\sigma$ and
   $g(\sigma_Q)+\nu\ge 0$.
\vspace{0.3em}

The above constant $\sigma_0$ refers to a specific nutrient level at which the transport rate of nutrients via the capillary network offsets the consumption rate of nutrients by the tumor cells; cf. \cite{fri-rei-99} with $\sigma_0=\frac{\sigma_B\varGamma}{\varGamma+\lambda_0}$. 
%
%
In (A1), the monotonicity of $f$ and $h$ signifies that the net nutrient consumption rates by the proliferating and quiescent cells are both strictly increasing for $\sigma\ge\sigma_0$.
The inequality  $f(\sigma_Q)\ge h(\sigma_Q)$ comes from the biological observation that the proliferating cells always consume nutrients more 
than the quiescent cells. 
The positive constant $\tilde\sigma$ in $(\rm A2)$ 
represents a threshold value
of nutrient concentration at which
the birth and the death rates of proliferating cells are in balance. 
The last inequality in $(\rm A3)$ means the proliferating cells always
grow faster than the quiescent cells. 
All these assumptions are biologically meaningful,
cf. \cite{cui-05,liu-zhuang,wu-wang-19}. The discontinuity of both the consumption rate and proliferation rate functions makes this kind of two-phase free boundary problem very difficult.

Multicellular tumor spheroids with concentric shells of proliferating, quiescent and necrotic cells can be observed in in vitro experiments, cf. \cite{byr-cha-96}. The above model \eqref{1.1} is a simplified version of the three-layer tumor model proposed by Byrne and Chaplain \cite{byr-cha-97}. In the special case $h(\sigma)\equiv 0$, this model can be 
regarded as a necrotic tumor model, for which the global well-posedness and long-time behavior of the radial transient solutions have been established, eg., \cite{cui-06,bue-erc-08,cui-fri-01,wu-wang-19,wu-xu-20,xu-zhang-zhou}, 
and moreover, the symmetry-breaking solutions and asymptotic
stability of the radial stationary solutions in non-radial 
case have also been well explored, see
\cite{hao-12,lu-hao-hu,wu-18,wu-19,wu-21}. 
If further $\sigma_Q\le\sigma_0$, then problem \eqref{1.1} becomes a one-layer tumor model with only proliferating cells. For many interesting and illuminating results of such models we refer the reader to \cite{cui-05,cui-09,cui-esc-07,esc-mat-10,fri-hu-06,fri-hu-08,fri-rei-99,he-xing-hu,hua-hu-24,hua-zha-hu-19,wu-16,zhao-hu,zhuang-cui} and the references cited therein. 

Recently,  
for the linear case \eqref{1.2}, Liu and Zhuang \cite{liu-zhuang} established the existence and uniqueness of the stationary solution and investigated the asymptotic behavior of the transient solutions to problem \eqref{1.1} by explicitly  solving the expression of nutrient concentration $\sigma(r,t)$ in terms of tumor radius $R(t)$ and quiescent core radius $\rho(t)$.  
The dynamics for this linear two-layer tumor model with a time lag in cell proliferation was considered in \cite{liu-z-2}.
%
By assuming that the quiescent cells linearly consume nutrients at the same rate 
as the proliferating cells, but with different growth rates, Zheng, Li and Zhuang 
\cite{zheng-li-zhuang} also studied a simplified three-layer tumor model with a necrotic core. 
The dependency relationship between the two unknown interfaces was carefully analyzed and the figures on the global evolution of tumor and the mutual transitions of its cell-layer structures were shown. Still, their work was based on the explicit expression of $\sigma(r,t)$. 

In this work we give rigorous analysis to problem \eqref{1.1} with nonlinear increasing functions $f$, $h$ and $g$ with conditions $(\rm A1)$--$(\rm A3)$.
In this general case, there is no explicit expression for $\sigma$, and more relevantly, $\sigma_r$ at the inner interface turns out to be a function of the quiescent core radius $\rho$, which greatly differs from that in the necrotic tumor model where $\sigma_r$ at $\rho$ is always zero. This phenomenon makes the problem more complicated and it requires a new analysis strategy.  
We develop a general method to handle the two-layer problem of this kind, not only just available for the tumor model with a quiescent core, 
but also for the necrotic model.  
Motivated by \cite{bue-erc-08,wu-wang-19},
for any given $\rho>0$, we shall first solve the elliptic problem for $\sigma$ in quiescent region $\{r\le\rho\}$, then with $\sigma_r(\rho)$ obtained, we carry on to solve $\sigma$ in
the region $\{r>\rho\}$. By utilizing a local existence method and a shooting method, we finally obtain the tumor radius $R=R(\rho)$ in terms of $\rho$. 
Based on the maximum principle, we establish a  
comprehensive and deep analysis on the relationship between 
$\sigma$, $\rho$ and $R$, and transform problem \eqref{1.1} into an abstract Cauchy problem of ordinary differential equation. Existence and uniqueness of the stationary solution is explored and the long-time behavior of the transient solution is studied. 
We find out a critical nutrient value $\sigma^{*}\in(\tilde\sigma,+\infty)$ such that problem $(\ref{1.1})$ has a unique stationary solution with a quiescent core if and only if $\bar\sigma>\sigma^{*}$. This stationary solution is globally asymptotically stable under radial perturbations. 
Our analysis also implies that 
we can control tumor growth by adjusting the external nutrient supply $\bar\sigma$.


We notice that the nutrient concentration $\sigma$ is strictly convex with respect to $r$.
By fully exploiting this important geometric property, we derive some estimates on
the steady tumor radius $R_s$ and the steady quiescent core radius $\rho_s$. 
We also observe that the tumor model \eqref{1.1} with a quiescent core formally converges to the necrotic tumor model if $h(\sigma_Q)$ vanishes, which arises our interest in finding some connections between these two models. 

%

By introducing a new auxiliary problem (\ref{4.3}), we are able to control the influence of consumption rate $h$ on the nutrient concentration $\sigma$ in the proliferating region, and prove that the stationary solution of problem \eqref{1.1} actually converges to the stationary one of the necrotic model as $h(\sigma)$ tends to zero. 
The connection between the transient solution of \eqref{1.1} and the transient one of the necrotic model possesses a similar convergent property. 
These facts suggest that in the radially symmetric setting, we can unify these two kinds of tumor models as one kind of two-layer tumor model from a mathematical view. 
      
The rest of this paper is arranged as follows.
In the next section we study stationary solutions of problem \eqref{1.1} and 
give some estimates for the stationary tumor radius $R_s$ 
under additional conditions on the proliferation rate function $g(\sigma)$.  
In Section \ref{3}, we reduce free boundary problem (\ref{1.1}) 
into a Cauchy problem of ordinary differential equation and 
study asymptotic behavior of transient solutions. In the last section we 
analyze the connection between the tumor models with a quiescent core
and with a necrotic core.

\medskip
\hskip 1em

\section{Stationary solutions}\label{2}
\setcounter{equation}{0}
\hskip 1em

In this section,  we study the existence of
stationary solutions of problem (\ref{1.1}) and derive some estimates
on the steady radius of the tumor. 


As is quite similar to the case of necrotic tumor model, 
there are two kinds of stationary 
solutions to problem
(\ref{1.1}). One kind of the stationary solution has a quiescent core
with radius $\rho$,   and satisfies the following problem 

\begin{equation}\label{2.1}
\left\{
\begin{array}{l}
  \displaystyle\sigma''(r)+{\frac2r}\sigma'(r) = h(\sigma)
  \qquad\mbox{for}\;\; 0<r<\rho,
\\ [0.3 cm]
\sigma'(0)=0,\;\;\sigma(\rho)=\sigma_Q
\\ [0.3 cm]
 \displaystyle\sigma''(r)+{\frac2r}\sigma'(r) = f(\sigma)
  \qquad\mbox{for}\;\; \rho<r<R,
\\ [0.3 cm]
  \sigma'(\rho-0)=  \sigma'(\rho+0),
  \qquad \sigma(R)=\bar\sigma,
\\[0.3 cm]
  \displaystyle
  \int_\rho^R g(\sigma(r))r^2dr-{\nu\over 3}\rho^3=0.
\end{array}
\right.
\end{equation}
The other kind of the stationary solution has no quiescent cores, 
and satisfies 
\begin{equation}\label{2.2}
\left\{
\begin{array}{l}
  \displaystyle\sigma''(r)+{2\over r}\sigma'(r) = f(\sigma)
  \qquad\mbox{for}\;\; 0<r<R,
\\[0.3 cm]
  \sigma'(0)=0, \quad
  \sigma(R)=\bar\sigma,\quad
\\ [0.3 cm]
\sigma(0)\ge\sigma_Q,
\\ [0.3 cm]
  \displaystyle
  \int_0^R g(\sigma(r))r^2dr=0.
\end{array}
\right.
\end{equation}
In order to solve these two problems, we first consider the following
 semi-linear elliptic boundary value problem
\begin{equation}\label{2.3}
\left\{
\begin{array}{l}
  \displaystyle v''(r)+{2\over r}v'(r) = h(v(r))
  \qquad\mbox{for}\;\; 0<r<\rho,
\\ [0.3 cm]
v'(0)=0,\;\;v(\rho)=\sigma_Q.
\end{array}
\right.
\end{equation}
 
\medskip

\begin{lem}\label{lem2.1}
        Suppose $(\rm A1)$ holds.
       For any $\rho>0$, problem $(\ref{2.3})$
       has a unique solution $v=V(r,\rho)$ for $ 0\le r\le \rho$
       and the following properties hold:
    
    $(i)$ $V$ is strictly increasing in $r$, and
    $$
      \sigma_0<V(r,\rho)\le\sigma_Q \qquad\mbox{for}\;\; 0\le r\le \rho.
    $$
    
    $(ii)$ $V$ is strictly convex in $r$, i.e.,
    $$
      V_{rr}(r,\rho) > 0 
      \qquad\mbox{for}\;\;  0\le r\le \rho.
    $$   
    
    $(iii)$ $V$ is strictly decreasing in $\rho$. Moreover, 
     by letting 
    $$
     \Phi(\rho):=V_r(\rho,\rho)=\displaystyle{1\over \rho^2}
     \int_0^\rho s^2h(V(s,\rho))ds\qquad \mbox{for}\;\; \rho>0,
    $$
      we have 
      \begin{equation}\label{2.3-}
      \displaystyle 0<\Phi(\rho)<{1\over 3}h(\sigma_Q)\rho\quad
      \textit{and} \quad\Phi'(\rho)>0.   
      \end{equation}

\end{lem}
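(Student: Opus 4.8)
The plan is to reduce the boundary value problem $(\ref{2.3})$ to an integral equation and solve it by a shooting argument, then read off $(i)$–$(iii)$ from the integral representation and from the linearized problem. Writing the equation in divergence form $\frac{1}{r^2}(r^2v')'=h(v)$ and integrating twice with $v'(0)=0$ gives the equivalent identity $r^2v'(r)=\int_0^r s^2h(v(s))\,ds$, hence $v(r)=a+\int_0^r t^{-2}\int_0^t s^2h(v(s))\,ds\,dt$ with $a=v(0)$. For each fixed $a$ this initial value problem has a unique $C^2$ solution $w(\cdot\,;a)$ on $[0,\rho]$ (local solvability near the singular point $r=0$ by contraction, global continuation since $h$ is Lipschitz by $(\rm A1)$). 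Since $h(\sigma_0)=0$, uniqueness gives $w(r;\sigma_0)\equiv\sigma_0<\sigma_Q$, while for $a=\sigma_Q$ we have $h(\sigma_Q)>0$, forcing $w$ to increase and $w(\rho;\sigma_Q)>\sigma_Q$. Because $a\mapsto w(\rho;a)$ is continuous and strictly increasing (the variational equation for $z=\partial_aw$ is $(r^2z')'=r^2h'(w)z$ with $z(0)=1,\;z'(0)=0$, and the maximum principle forces $z>0$), the intermediate value theorem yields a unique $a^\ast\in(\sigma_0,\sigma_Q)$ with $w(\rho;a^\ast)=\sigma_Q$; this defines $V(\cdot,\rho)$.

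For $(i)$, since $V(0,\rho)=a^\ast>\sigma_0$, a bootstrap on $V_r(r)=r^{-2}\int_0^r s^2h(V)\,ds$ shows $V>\sigma_0$, so $h(V)>0$ throughout, $V_r>0$, and $\sigma_0<V\le\sigma_Q$. For $(ii)$ I would exploit that $h(V(s))$ is increasing in $s$, whence $V_r(r)=r^{-2}\int_0^r s^2h(V(s))\,ds<\tfrac{r}{3}h(V(r))$; substituting into the equation gives $V_{rr}(r)=h(V)-\tfrac2rV_r>h(V)-\tfrac23h(V)=\tfrac13h(V)>0$, i.e. strict convexity (the limiting value at $r=0$ is handled by the same bound).

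For the monotonicity in $(iii)$ I would use $V(r,\rho)=w(r;a^\ast(\rho))$: if $\rho_1<\rho_2$ then $w(\rho_1;a^\ast(\rho_2))=V(\rho_1,\rho_2)<\sigma_Q=w(\rho_1;a^\ast(\rho_1))$ forces $a^\ast(\rho_2)<a^\ast(\rho_1)$, and strict monotonicity of $w$ in $a$ then gives $V(\cdot,\rho_2)<V(\cdot,\rho_1)$ on $[0,\rho_1]$. The identity $\Phi(\rho)=V_r(\rho,\rho)=\rho^{-2}\int_0^\rho s^2h(V(s,\rho))\,ds$ is immediate; its positivity follows from $h(V)>0$, and the upper bound from $V(s,\rho)<\sigma_Q$ for $s<\rho$ together with $\int_0^\rho s^2\,ds=\rho^3/3$.

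The hard part is $\Phi'(\rho)>0$. Differentiating the identity and using that $\psi:=V_\rho$ solves the linearized Sturm–Liouville problem $(s^2\psi')'=s^2h'(V)\psi$ with $\psi'(0)=0$ and $\psi(\rho)=-\Phi(\rho)$ (so $\psi<0$ with $|\psi|$ increasing by the maximum principle), one arrives at $\Phi'(\rho)=V_{rr}(\rho,\rho)+V_{r\rho}(\rho,\rho)$, where $V_{rr}(\rho,\rho)=h(\sigma_Q)-\tfrac2\rho\Phi>0$ but $V_{r\rho}(\rho,\rho)=\psi'(\rho)<0$. The obstacle is that these two contributions \emph{compete}, so a bare maximum principle is inconclusive. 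To control the balance I would pass to the rescaling $U(t,\rho)=V(\rho t,\rho)$ on $[0,1]$, which turns $(\ref{2.3})$ into $U_{tt}+\tfrac2tU_t=\rho^2h(U)$, $U_t(0)=0$, $U(1)=\sigma_Q$, converting the $\rho$-dependence into a reaction-coefficient dependence; a maximum-principle comparison of $V(\tfrac{\rho_1}{\rho_2}\cdot\,,\rho_1)$ with $V(\cdot,\rho_2)$ (they solve the equation with reaction coefficients $(\rho_1/\rho_2)^2$ and $1$) yields at least that $\rho\,\Phi(\rho)$ is increasing. Upgrading this to $\Phi'>0$ is the crux, which I expect to close by a finer estimate on $|\psi|$ (equivalently on the boundary slope of $U_\rho$) through a carefully chosen auxiliary function amenable to the maximum principle, exploiting the convexity bound $V_{rr}>\tfrac13h(V)$ from $(ii)$.
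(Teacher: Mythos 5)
Your treatment of existence/uniqueness (by shooting in $a=v(0)$ rather than the paper's monotone iteration between the constant sub/supersolution pair $\sigma_0$, $\sigma_Q$), of $(i)$, of $(ii)$ (your bound $V_{rr}>\tfrac13 h(V)$ is even slightly sharper than the paper's $V_{rr}\ge V_r/r$), and of the monotonicity of $V$ in $\rho$ are all sound, and the shooting route is a legitimate alternative. The problem is the last claim, $\Phi'(\rho)>0$, which you explicitly leave open: you correctly identify $\Phi'(\rho)=V_{rr}(\rho,\rho)+V_{r\rho}(\rho,\rho)$ and observe that the two terms have opposite signs, but then conclude that ``a bare maximum principle is inconclusive'' and sketch only a rescaling argument that yields the weaker statement that $\rho\,\Phi(\rho)$ is increasing. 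That is a genuine gap, and the auxiliary function you are searching for is in fact elementary.

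The missing idea is to treat the sum, not the two terms separately. Set $w(r):=v_r(r)+v_\rho(r)$. Then $w(\rho)=V_r(\rho,\rho)-V_r(\rho,\rho)=0$ exactly, and $\Phi'(\rho)$ is nothing but the one-sided boundary derivative $w_r(\rho)$. Subtracting the linearized equations for $v_r$ and $v_\rho$ gives
\begin{equation*}
w_{rr}+\frac{2}{r}\,w_r-h'(V)\,w=\frac{2}{r^2}\,v_r>0\qquad\mbox{for}\;\;0<r<\rho,
\end{equation*}
and since $w(0)=v_\rho(0)<0$ (using $v_r(0)=0$ and your own $V_\rho<0$) while $w(\rho)=0$, the strong maximum principle forces $w<0$ on $[0,\rho)$ with its maximum value $0$ attained only at $r=\rho$; the Hopf boundary point lemma then gives $w_r(\rho)>0$, i.e.\ $\Phi'(\rho)>0$. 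So the maximum principle does close the argument once it is applied to $w$ rather than to $v_r$ and $v_\rho$ individually; without this (or an equivalent device) your proof of \eqref{2.3-} is incomplete.
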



\begin{proof} 
Since $h(v)$ is strictly increasing, we can easily verify that 
  $\sigma_0$ and $\sigma_Q$ are a pair of lower and upper solutions
  of problem $(\ref{2.3})$.
  Hence by the upper and lower solution method,
  we see that there exists
  a unique solution $v(r)=V(r,\rho)$ such that
  $\sigma_0<V(r,\rho)< \sigma_Q$ for all $0<r<\rho$.
  Due to $(\rm A1)$, we have $h(V(r,\rho))>0$, which implies
\begin{equation}\label{daoshu}
 v'(r)={1\over r^2}
 \int_0^r s^2h(V(s,\rho))ds>0
 \qquad\mbox{for}\;\;0<r\le \rho.
\end{equation}
This proves the assertion $(i)$. 
From the above relation we can also get
$$
  v''(r) -\displaystyle\frac{v'(r)}{r}=h(V(r,\rho))
  -\frac{3}{r^3}\int_0^r s^2h(V(s,\rho))ds\ge 0,
$$
and the assertion $(ii)$ follows.

  Denote $\displaystyle v_\rho(r)={\partial V\over \partial \rho}(r,\rho)$. From (\ref{2.3}) we see $v_\rho$ satisfies
\begin{equation*}
\left\{
\begin{array}{l}
  \displaystyle(v_{\rho})_{rr}+{2\over r}(v_{\rho})_r = h'(V)v_{\rho}
  \qquad\mbox{for}\;\; 0<r<\rho,
\\ [0.3 cm]
(v_{\rho})_r(0)=0,\;\; v_{\rho}(\rho)=-V_r(\rho,\rho)<0.
\end{array}
\right.
\end{equation*}
  Then by the maximum principle we have $v_\rho(r)<0$ for $0<r<\rho$.
  Similarly, there holds
\begin{equation*}
\left\{
\begin{array}{l}
  \displaystyle(v_r)_{rr}+{2\over r}(v_r)_r-\frac{2}{r^2}v_r
  = h'(V) v_r
  \qquad\mbox{for}\;\; 0<r<\rho,
\\ [0.3 cm]
v_r(0)=0,\;\;v_r(\rho)>0.
\end{array}
\right.
\end{equation*}
  Denote $w(r)=v_r(r)+v_{\rho}(r)$.
  It satisfies
\begin{equation*}
\left\{
\begin{array}{l}
  \displaystyle w_{rr}+{2\over r}w_r= h'(V)w+\frac{2}{r^2} v_r
  \qquad\mbox{for}\;\; 0<r<\rho,
\\ [0.3 cm]
w(\rho)=0.
\end{array}
\right.
\end{equation*}
Note that $h'(V)\ge0$ and $v_r>0$, then by using the strong
maximum principle, we obtain
$\Phi'(\rho)=w_r(\rho)>0$.
The relation $0<\Phi(\rho)<{\frac{1}{3}}h(\sigma_Q)\rho$ comes from \eqref{daoshu}.
This completes the proof. 
 \end{proof}


 Since $\displaystyle \Phi(\rho)={1\over \rho^2}\int_0^\rho s^2h(V(s,\rho))ds$ for 
 $\rho>0$,  by continuity we let $\Phi(0)=0$.

Next, we employ a shooting method to solve $\sigma$ in the proliferating region
$\{\rho<r<R\}$.  For any given $\rho\ge0$, 
 we consider the following Cauchy problem 
\begin{equation}\label{2.5}
\left\{
\begin{array}{l}
 \displaystyle v''(r)+{2\over r}v'(r) = f(v(r))
  \qquad\mbox{for}\;\; r>\rho,
\\ [0.3 cm]
  v(\rho)=\sigma_Q,
\\ [0.3 cm]
  v'(\rho)=  \Phi(\rho).
\end{array}
\right.
\end{equation}


\medskip

\begin{lem}\label{lem2.2}
Suppose $(\rm A1)$ holds.
   For any $\rho\ge0$,  problem $(\ref{2.5})$
   has a unique maximal solution $v=\mathcal V(r,\rho)\in
   C^2(\left[\rho, R_{\infty}\right); \mathbb{R})$ and the
   following properties hold:

 $(i)$ $\mathcal V$ is strictly increasing in $r$,
  and
  $$\displaystyle \lim_{r\to R_{\infty}-0}\mathcal V(r,\rho)=+\infty.$$

   $(ii)$ $\mathcal V$ is strictly convex in $r$, i.e.,
$$
  \mathcal V_{rr}(r,\rho) > 0 \qquad\mbox{for}\;\;
  \rho\le  r<R_{\infty}.
$$

$(iii)$ $\mathcal V$ and $\mathcal V_r$ are both strictly decreasing
in $\rho$.
\end{lem}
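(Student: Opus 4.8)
The plan is to treat \eqref{2.5} as a Cauchy problem and to proceed in four stages: local existence and uniqueness, monotonicity and blow-up in $r$, convexity in $r$, and finally monotonicity in $\rho$. Since $f\in C^1$ on $[\sigma_0,+\infty)$ it is locally Lipschitz, so for $\rho>0$ the equation is regular at $r=\rho$ and the standard Picard iteration produces a unique local $C^2$ solution; for $\rho=0$ I would instead recast \eqref{2.5} as the integral equation $v(r)=\sigma_Q+\int_0^r s^{-2}\int_0^s t^2 f(v(t))\,dt\,ds$ and run a contraction argument to obtain the unique solution that is regular at the origin (consistent with $v'(0)=\Phi(0)=0$). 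Continuing this solution gives the maximal $\mathcal V(\cdot,\rho)$ on $[\rho,R_\infty)$.

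For $(i)$ I would first record the integral form $v'(r)=r^{-2}\big[\rho^2\Phi(\rho)+\int_\rho^r s^2 f(v(s))\,ds\big]$, obtained by integrating $(r^2v')'=r^2 f(v)$. Since $v(\rho)=\sigma_Q>\sigma_0$ forces $f(\sigma_Q)>0$ and $\Phi(\rho)\ge 0$, a first-vanishing-point argument shows $v'>0$ for all $r>\rho$: were $r_0$ the first zero of $v'$, the formula would give $v'(r_0)>0$ because $f(v)>0$ on $(\rho,r_0)$, a contradiction. Hence $v$ is strictly increasing, stays $\ge\sigma_Q$, and therefore remains in the range where $f$ is globally Lipschitz (so that in fact $R_\infty=+\infty$). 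For the blow-up I would argue that if $v$ tended to a finite limit then $f(v)$ would stay bounded, forcing $r^2v'$ and hence $v'$ to remain controlled, so the solution could be continued past $R_\infty$; quantitatively $f(v)\ge f(\sigma_Q)>0$ already yields $v'(r)\gtrsim r$, whence $v\to+\infty$.

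Convexity $(ii)$ follows by estimating $v''-v'/r=f(v)-\tfrac{3}{r^3}\big[\rho^2\Phi(\rho)+\int_\rho^r s^2 f(v)\,ds\big]$. Using $f(v(s))\le f(v(r))$ together with the bound $\rho^2\Phi(\rho)<\tfrac13 h(\sigma_Q)\rho^3\le\tfrac13 f(\sigma_Q)\rho^3\le\tfrac13 f(v(r))\rho^3$ coming from Lemma \ref{lem2.1} and $(\rm A1)$, the bracket is strictly less than $\tfrac13 f(v(r))r^3$, so $v''-v'/r>0$; combined with $v'>0$ this gives $v''>0$ (and at $r=\rho=0$ one checks directly $v''(0)=\tfrac13 f(\sigma_Q)>0$).

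The main obstacle is $(iii)$. Rather than using the variational equation for $\partial_\rho\mathcal V$, whose Cauchy data at $r=\rho$ have an unclear sign, I would compare two solutions $v_i=\mathcal V(\cdot,\rho_i)$ with $\rho_1<\rho_2$ on the common interval $[\rho_2,R_\infty)$. At $r=\rho_2$ one has $v_1(\rho_2)>\sigma_Q=v_2(\rho_2)$, and the key inequality $v_1'(\rho_2)>\Phi(\rho_2)=v_2'(\rho_2)$ follows from the integral identity by splitting $[0,\rho_2]=[0,\rho_1]\cup[\rho_1,\rho_2]$: on $[0,\rho_1]$ the monotonicity of $V$ in $\rho$ (Lemma \ref{lem2.1}$(iii)$) gives $h(V(s,\rho_1))>h(V(s,\rho_2))$, while on $[\rho_1,\rho_2]$ we have $f(v_1(s))\ge f(\sigma_Q)\ge h(\sigma_Q)\ge h(V(s,\rho_2))$ by $(\rm A1)$, with strict inequality on a set of positive measure. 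Setting $\phi=v_1-v_2$, which satisfies $(r^2\phi')'=r^2(f(v_1)-f(v_2))$ with $\phi(\rho_2)>0$ and $\phi'(\rho_2)>0$, a first-crossing argument (using that $f$ is increasing) yields $\phi>0$ and $\phi'>0$ on $(\rho_2,R_\infty)$, i.e. both $\mathcal V$ and $\mathcal V_r$ are strictly decreasing in $\rho$. The crux is realizing that the comparison must be anchored at the larger base point $\rho_2$ and that the structural hypothesis $f(\sigma_Q)\ge h(\sigma_Q)$ is precisely what renders the two Cauchy data comparable there.
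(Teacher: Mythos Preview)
Your proposal is correct and follows essentially the same approach as the paper: both integrate $(r^2v')'=r^2f(v)$ to obtain the first-order integral identity, derive monotonicity and convexity from it (the paper's convexity estimate $\mathcal V_{rr}-\mathcal V_r/r\ge \tfrac{\rho^3}{r^3}(f(\sigma_Q)-h(\sigma_Q))$ is a minor variant of yours), and prove $(iii)$ by exactly the comparison you describe---anchoring at $\rho_2$, splitting $[0,\rho_2]$ into $[0,\rho_1]\cup[\rho_1,\rho_2]$, invoking Lemma~\ref{lem2.1}$(iii)$ on the first piece and $f(\sigma_Q)\ge h(\sigma_Q)$ on the second, then running a first-crossing/iteration argument on $\phi=v_1-v_2$. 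Your observation that the bounded derivative in (A1) forces $R_\infty=+\infty$ is a slight sharpening the paper leaves implicit.
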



\begin{proof}

We first show the local existence and uniqueness of problem
(\ref{2.5}).  Let $b>0$ and
$$
M:=\max_{[\sigma_Q,\sigma_Q+b]}\{|f(u)|+|f'(u)|\},
\qquad
h:=\min\Big\{\frac{b}{2h(\sigma_Q)(\rho+1)},\sqrt{\frac{1}{3M}},\sqrt{\frac{b}{2M}}
\Big\}.
$$
  Set
$$
 \mathscr{B}:=\{u\in C([\rho,\rho+h]): \sigma_Q\le u(r)\le\sigma_Q+b\}.
$$
 Obviously,  $\mathscr{B}$ is a closed subspace of Banach space $C({[\rho,\rho+h]})$.
One can easily verify that problem $(\ref{2.5})$ is equivalent to the following integral equation
\begin{equation}\label{2.6}
v(r)=\sigma_Q+ \rho^2 \Phi(\rho)(\frac{1}{\rho}-\frac{1}{r})+\int_{\rho}^r\frac{1}{\alpha^2}\int_{\rho}^\alpha s^2f(v(s))dsd\alpha.
\end{equation}
Define a mapping $T: \mathscr{B}
\rightarrow C([\rho,\rho+h])$ by
$$
 Tu(r):=\sigma_Q+ \rho^2\Phi(\rho)(\frac{1}{\rho}-\frac{1}{r})+\int_{\rho}^r\frac{1}{\alpha^2}\int_{\rho}^\alpha s^2f(u(s))dsd\alpha \qquad
 \mbox{for} \;\; u\in \mathscr{B}.
$$
Then for any $u\in\mathscr{B}$,  it follows from (A1) and
$\eqref{2.3-}$ that
\begin{equation*}
  \sigma_Q\le   Tu(r) \le \sigma_Q+\frac{1}{3}h(\sigma_Q)\rho h+Mh^2<\sigma_Q+b     \;\;\mbox{for}\;\; \rho\le r\le \rho+h,
\end{equation*}
 hence $T(\mathscr{B})\subset \mathscr{B}$.
 Moreover, $T$ is a contraction mapping for the reason that
\begin{equation*}
\begin{matrix}
\begin{aligned}
  \Vert Tu-Tv \Vert \ &\le \sup_{[\rho,\rho+h]}
  \int_{\rho}^r\frac{1}{\alpha^2}\int_{\rho}^\alpha s^2\vert f(u(s))-f(v(s))\vert
   dsd\alpha   \\
    &\le Mh^2\Vert u-v\Vert \le\frac{1}{3}\Vert u-v\Vert  \qquad
 \mbox{for} \;\; u, v\in \mathscr{B}.
\end{aligned}
\end{matrix}
\end{equation*}
By Banach fixed point theorem,
 $T$ has a unique fixed point  $v(r)=\mathcal V(r,\rho)$ in $\mathscr{B}$,
 which is  the unique local solution of problem $(\ref{2.5})$.

 Proceeding as in Lemma 2.1 $(i)$, we see that $\mathcal V$ and $\mathcal V_r$
 are both strictly increasing in $r$ for $r\ge\rho$. By extension we obtain the maximal existence interval $[\rho, R_{\infty})$, where $R_{\infty}\in(\rho,+\infty]$,
 and we easily get the assertion $(i)$.

On the other hand, from $(\ref{2.6})$ and (A1), we also have
\begin{equation*}
\begin{matrix}
\begin{aligned}
\mathcal V_{rr}(r,\rho) -\displaystyle\frac{\mathcal V_{r}(r,\rho)}{r}&
=f(\mathcal V(r,\rho))-\displaystyle\frac{3}{r^3}\rho^2\Phi(\rho)-\displaystyle\frac{3}{r^3}\int_{\rho}^{r}s^2
f(\mathcal V(s,\rho))ds \\
&\ge\displaystyle\frac{\rho^3}{r^3}(f(\sigma_Q)-h(\sigma_Q))\ge 0.
\end{aligned}
\end{matrix}
\end{equation*}
Hence the assertion $(ii)$ holds.

Finally, we observe that for $r>\rho$,
\begin{equation}\label{Vr}
    r^2\mathcal V_r(r,\rho)=\rho^2\Phi(\rho)+\int_\rho^r s^2f(\mathcal V(s,\rho)) ds
    =\int_0^\rho s^2 h(V(s,\rho)) ds +\int_\rho^r s^2f(\mathcal V(s,\rho)) ds.
\end{equation}
Then by (A1), Lemma 2.1 $(iii)$ and a comparison argument,
 we easily get
 that $\mathcal V_r(r,\rho)$ is strictly decreasing in $\rho$,
 and thus $\mathcal V(r,\rho)$ is also strictly decreasing.
 In fact,
 if $\rho_1<\rho_2$, from \eqref{Vr} we have for $r>\rho_2$,
\begin{align*}
     r^2\mathcal{V}_r(r,\rho_1)-r^2\mathcal{V}_r(r,\rho_2)= & \,\, \int_0^{\rho_1} s^2h(V(s,\rho_1)) ds + \left(\int_{\rho_1}^{\rho_2}+\int_{\rho_2}^r\right) s^2 f(\mathcal{V}
    (s,\rho_1)) ds \\
     & \,- \left(\int_0^{\rho_1} +\int_{\rho_1}^{\rho_2}\right) s^2h(V(s,\rho_2))ds - \int_{\rho_2}^rs^2 f(\mathcal{V}(s,\rho_2)) ds.
\end{align*}
Since $V(s,\rho_1)>V(s,\rho_2)$ ensured by Lemma \ref{2.1} for $s\in(0,\rho_1)$ and $\mathcal{V}(s,\rho_1)>\sigma_Q>V(s,\rho_2)$ for $s\in(\rho_1,\rho_2)$, we deduce from the above relation and the properties of $f$ and $h$ that 
\begin{equation*}
\left\{
\begin{array}{l}
\,\mathcal{V}_r(r,\rho_1)-\mathcal{V}_r(r,\rho_2)> \displaystyle\frac1{r^2}\int_{\rho_2}^r s^2 \big[f(\mathcal{V}(s,\rho_1))-f(\mathcal{V}(s,\rho_2))\big] ds\quad \mbox{for}\;\;  r>\rho_2,
  \\ [0.3 cm]
\,\mathcal{V}(\rho_2,\rho_1)-\mathcal{V}(\rho_2,\rho_2)>0. 
\end{array}
\right.
\end{equation*}
This implies that both $\mathcal{V}(r,\rho_1)>\mathcal{V}(r,\rho_2)$ and $\mathcal{V}_r(r,\rho_1)>\mathcal{V}_r(r,\rho_2)$ for all $r>\rho_2$ by iteration argument. 
It follows that $\mathcal{V}(r,\rho)$ and $\mathcal{V}_r(r,\rho)$ are both decreasing in $\rho$.  The assertion $(iii)$ follows and
the proof is complete.\end{proof}

 \bigskip

From the above results, we see that for any given $\rho\ge 0$ and
  $\bar\sigma>\sigma_Q$,  there exists a unique $R=R(\rho)>\rho$ such that
\begin{equation}\label{shoot}
\mathcal V(R(\rho),\rho)=\bar\sigma.
\end{equation}
Then  problem $(\ref{2.1})_1$--$(\ref{2.1})_4$ admits a unique solution
$(\sigma, R)=(\Sigma(r,\rho,R(\rho)), R(\rho))$ with $R(\rho)\ge \rho$,
where
\begin{equation}
\Sigma(r,\rho, R)=\left\{
\begin{array}{rl}
  V(r,\rho)\quad \mbox{for}\;\; 0\le r\le \rho,
  \\ [0.3 cm]
  \mathcal V(r,\rho)\quad \mbox{for}\;\; \rho<r\le R.
\end{array}
\right.
\end{equation}
 From Lemma 2.2 $(iii)$,  we have  $R(\rho)$ is strictly increasing in $\rho$.
Define
\begin{equation}\label{Rc}
R_c:=\lim_{\rho\to 0^+} R(\rho).
\end{equation}
 It is the critical radius such that problem $(\ref{2.2})_1$--$(\ref{2.2})_2$
 with $\rho=0$
 has a unique solution satisfying $\sigma(0)=\sigma_Q$ for $R=R_c$.  Obviously,
 $R_c>0$.

 Clearly,  the mapping $\rho\mapsto R(\rho)$ is a 1-1 correspondence.  So we can also regard $\rho$ as a function of the variable $R$, i.e.,
 $\rho=\rho(R)$ for
 $R\ge R_c$,  and rewrite the solution
 $\sigma=\Sigma(r,\rho(R),R)$ for $\rho(R)\le r\le R$.

We make the variable transformation $s=r/R$ and denote
$$
\eta(R)=\rho(R)/R, \qquad U(s,\eta(R),R)=\Sigma(sR,\eta(R)R,R).
$$
 Then $u(s)=U(s,\eta(R),R)$ satisfies the following problem
\begin{equation}\label{2.7}
\left\{
\begin{array}{l}
 \displaystyle u''(s)+{2\over s}u'(s) = R^2f(u)
  \qquad\mbox{for}\;\; \eta<s<1,
\\ [0.3 cm]
  u(\eta)=\sigma_Q,
\\ [0.3 cm]
  u'(\eta)=  R\Phi(\eta R),
\\[0.3 cm]
  u(1)=\bar\sigma.
\end{array}
\right.
\end{equation}
\medskip

\begin{lem}\label{lem2.3}
  Suppose $(\rm A1)$ and
   $\bar\sigma>\sigma_Q$ hold. For any $R\ge R_c$,  problem $(\ref{2.7})$
   has a unique solution $(u,\eta)=(U(s,\eta(R), R), \eta(R))$, 
   and the following
   properties hold:
   
   $(i)$ $U(s,\eta(R), R)$ is strictly increasing in $s$,
   and strictly decreasing in R.

  $(ii)$ $U(s,\eta(R), R)$ is strictly convex in $s$ and satisfies
$$
U_{ss}(s,\eta(R),R)\ge\frac{1}{s}U_{s}(s,\eta(R),R)>0.
$$

  $(iii)$ $\eta(R)$ is continuous and strictly increasing for $R\ge R_c$,
  $\eta(R_c)=0$ and  $\displaystyle\lim_{R\to+\infty}\eta(R)=1$.
\end{lem}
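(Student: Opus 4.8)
The plan is to obtain the whole statement by transporting the already–constructed solution $\Sigma(r,\rho(R),R)$ through the scaling $s=r/R$, and to single out the strict monotonicity of $\eta$ as the one genuinely new point. For $R\ge R_c$ set $\rho=\rho(R)$ and $u(s):=\Sigma(sR,\rho,R)$. A chain–rule computation shows $u$ solves the ODE in $(\ref{2.7})$, with $u(\eta)=\sigma_Q$ for $\eta=\rho/R$, $u'(\eta)=R\mathcal V_r(\rho,\rho)=R\Phi(\eta R)$ (using the matching condition) and $u(1)=\bar\sigma$; conversely any solution of $(\ref{2.7})$ rescales to a solution of $(\ref{2.1})_1$--$(\ref{2.1})_4$, which is unique, so $(u,\eta)=(U(s,\eta(R),R),\eta(R))$ is the unique solution. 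Monotonicity in $s$ (part of $(i)$) and convexity $(ii)$ then follow exactly as in Lemma \ref{lem2.2}: from $(s^2u')'=s^2R^2f(u)$ one gets $s^2u'(s)=\eta^2u'(\eta)+R^2\int_\eta^s t^2f(u)\,dt>0$, and a short estimate gives
\begin{equation*}
u''(s)-\frac{1}{s}u'(s)\ge\frac{\eta^2}{s^3}\big[R^2\eta f(\sigma_Q)-3R\Phi(\eta R)\big]\ge 0,
\end{equation*}
where the last step uses $f(\sigma_Q)\ge h(\sigma_Q)$ from $(\rm A1)$ together with $3\Phi(\eta R)<h(\sigma_Q)\eta R$ from $(\ref{2.3-})$. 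This yields $U_{ss}\ge\frac1s U_s>0$.

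Next I would dispatch the elementary parts of $(iii)$. Continuity of $\eta(R)=\rho(R)/R$ comes from continuity of the inverse $\rho(\cdot)$ of the strictly increasing map $R(\cdot)$, and $\eta(R_c)=0$ since $\rho(R_c)=0$ by $(\ref{Rc})$. For $\lim_{R\to+\infty}\eta(R)=1$ I would show the proliferating rim has uniformly bounded width: since $\mathcal V\ge\sigma_Q$ on $[\rho,R]$ forces $f(\mathcal V)\ge f(\sigma_Q)>0$, the representation $(\ref{Vr})$ gives $\mathcal V_r(r,\rho)\ge\frac{f(\sigma_Q)}{3}(r-\rho)$, and integrating from $\rho$ to $R$ against $\bar\sigma-\sigma_Q$ forces $(R-\rho)^2\le 6(\bar\sigma-\sigma_Q)/f(\sigma_Q)$. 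Hence $R(\rho)-\rho$ is bounded, and $\eta=\rho/(\rho+(R-\rho))\to1$ as $\rho\to\infty$.

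The crux is the strict monotonicity of $\eta(R)$, which is equivalent to $R'(\rho)<R/\rho$, i.e. to $\psi(R)>0$ for the auxiliary function $\psi(r):=\rho\,\mathcal V_\rho(r,\rho)+r\,\mathcal V_r(r,\rho)$ on $[\rho,R]$. Differentiating the $\mathcal V$–equation in $r$ and in $\rho$ and combining, I expect the identity
\begin{equation*}
\psi''+\frac{2}{r}\psi'-f'(\mathcal V)\,\psi=2f(\mathcal V)>0,
\end{equation*}
together with the boundary data $\psi(\rho)=0$ (from $\frac{d}{d\rho}\mathcal V(\rho,\rho)=0$) and the one–sided derivative $\psi'(\rho)=\rho\Phi'(\rho)+\Phi(\rho)>0$, where the second derivatives of $\mathcal V$ at $(\rho,\rho)$ cancel and only the positive quantities $\Phi,\Phi'$ from Lemma \ref{lem2.1} survive. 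If $\psi(R)\le0$, then the maximum principle for the operator with nonpositive zeroth–order coefficient $-f'(\mathcal V)$ would give $\psi\le0$ on $[\rho,R]$ with a boundary maximum at $\rho$, and Hopf's lemma would force $\psi'(\rho)<0$, a contradiction. Hence $\psi(R)>0$ and $\eta'(R)>0$. This is precisely the main obstacle: the sign of $\psi(R)$ is invisible from $\mathcal V_\rho<0<\mathcal V_r$ alone, and it is the cancellation leaving $\psi'(\rho)=\rho\Phi'(\rho)+\Phi(\rho)$ that makes the maximum–principle argument close.

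Finally, the remaining assertion in $(i)$, that $U$ is strictly decreasing in $R$, follows once $\eta'(R)>0$ is in hand. Differentiating $(\ref{2.7})$ in $R$ shows that $U_R$ solves
\begin{equation*}
(U_R)''+\frac{2}{s}(U_R)'-R^2f'(u)\,U_R=2Rf(u)>0\quad\text{in }(\eta,1),
\end{equation*}
with $U_R(1)=0$ (since $u(1)\equiv\bar\sigma$) and, differentiating $u(\eta(R),R)\equiv\sigma_Q$, $U_R(\eta)=-u'(\eta)\,\eta'(R)=-R\Phi(\eta R)\,\eta'(R)<0$. Both boundary values being nonpositive, the maximum principle gives $U_R\le 0$, and the strong maximum principle upgrades this to $U_R<0$ in $(\eta,1)$, which completes $(i)$.
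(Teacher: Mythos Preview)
Your proof is correct, but the route to $\eta'(R)>0$ and $U_R<0$ genuinely differs from the paper's. The paper works directly in the scaled variable and couples the two unknowns: it sets $z=\partial u/\partial R$, $\xi=\eta'(R)$, derives the linearized system \eqref{2.8} in which both $z(\eta)$ and $z'(\eta)$ depend on $\xi$ (the latter through $h(\sigma_Q)-f(\sigma_Q)$ and the auxiliary quantity $\Psi(\rho)=\rho^{-2}\int_0^\rho s^2h'(V)V_\rho\,ds\le 0$), and argues by contradiction: if $\xi\le 0$ then $z(\eta)\ge0$ and $z'(\eta)>0$, which the maximum principle forbids; hence $\xi>0$, $z(\eta)<0$, and then $z<0$ throughout. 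So both conclusions fall out at once. You instead decouple: in the unscaled variables you introduce the scaling-vector-field combination $\psi=\rho\,\mathcal V_\rho+r\,\mathcal V_r$, obtain a clean equation with $\psi(\rho)=0$, $\psi'(\rho)=\rho\Phi'(\rho)+\Phi(\rho)>0$, and a Hopf/maximum-principle argument yields $\psi(R)>0$, i.e.\ $\eta'>0$; only afterwards do you return to the scaled picture to get $U_R<0$. Your argument is a bit more transparent---it uses only $\Phi,\Phi'>0$ from Lemma~\ref{lem2.1} and never needs $\Psi$---while the paper's has the economy of settling both signs simultaneously. For $\lim_{R\to\infty}\eta(R)=1$ you also diverge: the paper passes to the limit in an integral identity for $(\bar\sigma-\sigma_Q)/R^2$, whereas you bound the rim width $R-\rho$ uniformly via $\mathcal V_r\ge\tfrac{1}{3}f(\sigma_Q)(r-\rho)$; both are short and valid.
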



\begin{proof}
The proofs of the monotonicity and the convexity of 
$U(s,\eta(R), R)$ in $s$ are 
similar as in Lemma \ref{lem2.1}, we omit it here. In the following, we mainly show the monotonicity 
of $U(s,\eta(R), R)$ in $R$.

Denote $u(s,R)=U(s,\eta(R),R)$ and take
$$
  z(s)={\partial u\over\partial R}(s,R),\qquad \xi=\eta'(R).
$$
By a direct computation,   
  $z(s)$ and $\xi$ satisfy the following problem
\begin{equation}\label{2.8}
\left\{
\begin{array}{l}
  \displaystyle z''(s)+{2\over s}z'(s)=R^2f'(u)z + 2 R f(u)
  \qquad\mbox{for}\;\; \eta<s<1,
\\ [0.3 cm]  
  z(1)=0,\qquad z(\eta)=-R\Phi(\eta R)\xi,
\\ [0.3 cm]
  z'(\eta)=\Phi(\eta R)+\Phi'(\eta R)\eta R+(h(\sigma_Q)-f(\sigma_Q)
  +\Psi(\eta R))R^2\xi,
\end{array}
\right.
\end{equation}
   where $\eta=\eta(R)$ and 
$$
  \Psi(\rho)={1\over \rho^2}\int_0^\rho s^2 h'(V(s,\rho))V_\rho(s,\rho) ds.
$$ 
  Here we have used the relation
$$
  \Phi'(\rho)=-{2\over \rho}\Phi(\rho)+h(\sigma_Q)+\Psi(\rho).
$$
  In fact, from $(\ref{2.7})_3$ we have 
$$
  z'(\eta)=\Phi(\eta R)+R\Phi'(\eta R)(\eta' R+\eta)-u''(\eta,R)\eta'.
$$
  This relation together with $(\ref{2.7})_1$ leads to the last equation
  of $(\ref{2.8})$.
   If $\xi\le 0$,  by $(\rm A1)$ and Lemma \ref{lem2.1} $(iii)$ 
   we see $z(\eta)\ge0$, and by $\Psi(\rho)\le 0$ we also have
  $z'(\eta)>0$. On the other hand, since $f(u)>0$,  by the strong maximum
   principle, we have $z'(\eta)<0$. Hence
  $\xi>0$ and then $z(\eta)<0$. Again by the maximum principle, we get   
\begin{equation}\label{2.9}
  z(s)={\partial u\over\partial R}(s,R)<0 \quad\mbox{for}\;\; \eta<s<1,  
  \qquad\mbox{and}\qquad
  \xi=\eta'(R)>0.
\end{equation}
Then the monotonicity of $u(s,R)$ follows.
Finally, from $(\ref{2.7})$ we have
$$
  \frac{\bar{\sigma}-\sigma_Q}{R^2}={\eta^2(R)\over R}\Big(\frac{1}  
  {\eta(R)}-1\Big)\Phi(\eta(R)R)
  +\int_{\eta(R)}^1\frac{1}{\alpha^2}\int_{\eta(R)}^{\alpha}s^2f(u(s,R))dsd\alpha.
$$
By letting $R\to+\infty$ and observing $u\ge\sigma_Q>\sigma_0$, we deduce $\displaystyle\lim_{R\to+\infty}\eta(R)=1$.
This completes the proof. 
\end{proof} 
 

  From Lemma \ref{lem2.2} and Lemma \ref{lem2.3}, we conclude that
  if and only if  $R\ge R_c$,
  problem $(\ref{2.1})_1$-$(\ref{2.1})_4$
  has a unique solution
  $(\sigma,\rho)=(\Sigma(r, \rho(R), R), \rho(R))$
  where $\rho(R)= \eta(R)R$.

Define
\begin{equation}\label{2.10}
  F(R):={1\over R^3}\Big[\int_{\rho(R)}^{R}g(\Sigma(r,\rho(R), R))r^2 dr-
  {\nu \over 3}\rho^3(R)\Big]
  \qquad \mbox{for} \;\;R\ge R_c.
\end{equation}
Then problem $(\ref{2.1})$ is equivalent to equation $F(R)=0$.

\medskip

\begin{lem}\label{lem2.4}
  Suppose $(\rm A1)$--$(\rm A3)$ hold. 
  Then there exists a critical value $\sigma^*\in (\tilde\sigma, +\infty)$ such that
  
  $(i)$ For $\bar\sigma\ge\sigma^*$, equation $F(R)=0$ has a unique  
  root $R_s\in [R_c,+\infty)$.
  
  $(ii)$ For 
  $\bar\sigma<\sigma^*$, equation $F(R)=0$ has 
  no roots in $[R_c, +\infty)$.  
\end{lem}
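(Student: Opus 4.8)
The plan is to reduce the statement to two facts: that $F$ is strictly decreasing on $[R_c,+\infty)$, and that the sign of $F(R_c)$ is governed by the position of $\bar\sigma$ relative to a threshold $\sigma^{*}$. First I would rewrite $F$ in the rescaled variable $s=r/R$ of Lemma~\ref{lem2.3}. Substituting $r=sR$ and $\rho(R)=\eta(R)R$ into \eqref{2.10} and using $U(s,\eta(R),R)=\Sigma(sR,\eta(R)R,R)$, the factor $R^{3}$ cancels and one obtains
\[
F(R)=\int_{\eta(R)}^{1} g\big(U(s,\eta(R),R)\big)\,s^{2}\,ds-\frac{\nu}{3}\,\eta^{3}(R),\qquad R\ge R_c.
\]
This rewriting is the crucial simplification, since it removes the explicit boundary contribution $g(\bar\sigma)R^{2}$ that appears upon differentiating \eqref{2.10} directly and that would otherwise compete in sign with the interior terms.

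The key step is then to differentiate and read off the sign. Writing $\eta=\eta(R)$, using $U(\eta,\eta,R)=\sigma_Q$ for the term coming from the moving lower limit, and denoting by $z(s)$ the total derivative $dU/dR$ introduced in \eqref{2.8}, I obtain
\[
F'(R)=-\big(g(\sigma_Q)+\nu\big)\,\eta^{2}\,\eta'(R)+\int_{\eta}^{1}g'\big(U\big)\,z(s)\,s^{2}\,ds.
\]
Every ingredient now has a definite sign supplied by the preceding results: $g(\sigma_Q)+\nu\ge0$ by $(\rm A3)$ and $\eta'(R)>0$ by \eqref{2.9}, so the first term is $\le0$; while $g'>0$ by $(\rm A2)$ and $z(s)<0$ on $(\eta,1)$ by \eqref{2.9}, so the integral is strictly negative. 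Hence $F'(R)<0$ on $[R_c,+\infty)$ and $F$ is strictly decreasing. I expect this sign computation to be the heart of the proof; its difficulty is concentrated entirely in the monotonicity-in-$R$ properties $z<0$ and $\eta'>0$, which are, however, exactly what Lemma~\ref{lem2.3} has already established.

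It remains to evaluate the endpoints and to pin down $\sigma^{*}$. As $R\to+\infty$ one has $\eta(R)\to1$ by Lemma~\ref{lem2.3}$(iii)$, and since $\sigma_Q\le U\le\bar\sigma$ keeps $g(U)$ bounded, the integral over the shrinking interval $(\eta,1)$ tends to zero, giving $\lim_{R\to+\infty}F(R)=-\nu/3<0$. For the left endpoint, where $\eta(R_c)=0$ and $U(s,0,R_c)=\mathcal V(sR_c,0)$, I would study the $\bar\sigma$-independent function $G_{0}(R):=\int_{0}^{R}g(\mathcal V(r,0))\,r^{2}\,dr$: since $\mathcal V(\cdot,0)$ increases from $\sigma_Q<\tilde\sigma$, the derivative $G_{0}'(R)=g(\mathcal V(R,0))R^{2}$ is first negative and then positive as $\mathcal V$ crosses $\tilde\sigma$, while $G_{0}(0)=0$ and $G_{0}(R)\to+\infty$; thus $G_{0}$ has a unique positive root $R_{s}^{0}$ lying beyond the crossing point, and I set $\sigma^{*}:=\mathcal V(R_{s}^{0},0)$, which then satisfies $\sigma^{*}>\tilde\sigma$ automatically. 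Because $\bar\sigma=\mathcal V(R_c,0)$ is increasing in $R_c$ and $F(R_c)=R_c^{-3}G_{0}(R_c)$, the sign of $F(R_c)$ coincides with that of $\bar\sigma-\sigma^{*}$. Combining this with strict monotonicity and $F(+\infty)<0$ closes both cases by the intermediate value theorem: for $\bar\sigma\ge\sigma^{*}$ the graph of $F$ crosses zero exactly once, giving the unique root $R_s\in[R_c,+\infty)$, whereas for $\bar\sigma<\sigma^{*}$ one has $F<0$ on all of $[R_c,+\infty)$ and no root exists.
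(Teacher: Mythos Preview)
Your proof is correct and, in its first half, coincides with the paper's: the rescaled form of $F$, the sign computation $F'(R)<0$ via \eqref{2.9} and $(\rm A2)$--$(\rm A3)$, and the limit $F(+\infty)=-\nu/3$ are exactly the paper's argument.

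Where you diverge is in the identification of $\sigma^{*}$. The paper treats $\mathcal F(\bar\sigma):=F(R_c(\bar\sigma))$ directly as a function of $\bar\sigma$, differentiates the boundary value problem \eqref{2.7} in the parameter $\bar\sigma$, and runs a second maximum-principle argument (parallel to the one behind \eqref{2.9}) to obtain $\partial_{\bar\sigma}Y>0$ and $R_c'(\bar\sigma)>0$, hence $\mathcal F'(\bar\sigma)>0$; it then invokes the boundedness of $f'$ together with \cite{wu-wang-19} to get $\mathcal F(+\infty)>0$. Your route is more elementary: you observe that the initial-value profile $\mathcal V(\cdot,0)$ does not depend on $\bar\sigma$, so that $F(R_c)=R_c^{-3}G_0(R_c)$ with $G_0(R)=\int_0^R g(\mathcal V(r,0))\,r^2\,dr$ a single universal function, and the sign change of $G_0$ follows by one-variable calculus from the monotone crossing of $\mathcal V(\cdot,0)$ through $\tilde\sigma$. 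This bypasses the second PDE differentiation entirely. The trade-off is that your claim $G_0(R)\to+\infty$ tacitly uses that the maximal existence interval of $\mathcal V(\cdot,0)$ is all of $[0,+\infty)$; this is precisely where the boundedness of $f'$ in $(\rm A1)$ is needed, and it is the same ingredient the paper uses (through the cited reference) for $\mathcal F(+\infty)>0$. You should make that dependence explicit.
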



\begin{proof}
 We rewrite 

$$
  F(R)
  =  \int_{\eta(R)}^1 g(u(s,R))s^2ds-{\nu\over3}\eta^3(R)
  \qquad \mbox{for}\;\; R\ge R_c.
$$
  From Lemma \ref{lem2.3} $(iii)$, we have 
\begin{equation} \label{2.12}
\lim_{R\to+\infty} F(R)=-{\nu\over3}<0.
\end{equation}
    By $(\rm A2)$, $(\rm A3)$ and (\ref{2.9}), we have  for any $R>R_c$,
 \begin{equation}\label{2.13}
   F'(R)=\int_{\eta(R)}^1 g'(u(s,R)){\partial u\over \partial R}(s)s^2ds
   -(g(\sigma_Q)+\nu)\eta^2(R)\eta'(R)
  <0.
 \end{equation}
 Fix $\tilde\sigma>\sigma_Q>\sigma_0$ and take $\bar\sigma$ as a parameter. 
 We recall the definition 
 of $R_c$ in (\ref{Rc}) and regard  $R_c=R_c(\bar\sigma)$ as a function of $\bar\sigma$ 
  for $\bar\sigma\in(\sigma_Q,+\infty)$. 

Denote
$$
  Y(s,\bar\sigma):=U(s,0,R_c(\bar\sigma)) \qquad\mbox{and}\qquad
  \mathcal F(\bar\sigma):=F(R_c(\bar\sigma))=\int_0^1 g(Y(s,\bar\sigma))s^2ds.
$$
  Obviously, we have $\mathcal F(\tilde\sigma)<g(\tilde \sigma)=0$.
Let $\displaystyle y(s)={\partial Y\over\partial \bar\sigma}(s,\bar\sigma)$ and 
  $\psi=R_c'(\bar\sigma)$,
  one can verify that 
\begin{equation}
\left\{ 
\begin{array}{l}
  \displaystyle y''(s)+{2\over s}y'(s)=R_c(\bar\sigma)^2f'(Y)y+2 R_c(\bar\sigma) \psi f(Y)
  \qquad\mbox{for}\;\; 0<s<1,
\\ [0.3 cm]  
  y(1)=1,\qquad y(0)=0,\qquad
  y'(0)=0.
\end{array}
\right.
\end{equation}
Similarly to (\ref{2.9}), we obtain
\begin{equation}\label{2.14}
  y(s)={\partial Y\over\partial \bar\sigma}(s,\bar\sigma)> 0\qquad\mbox{and}\qquad
  \psi=R_c'(\bar\sigma)>0.
\end{equation}
This implies that
\begin{equation}\label{2.15}
\mathcal F'(\bar\sigma)=\int_0^1 g'(Y(s,\bar\sigma))y(s)s^2ds>0
\qquad\mbox{for}\;\;\bar\sigma>\sigma_Q.
\end{equation}
   Moreover, following a similar proof of Theorem \cite{wu-wang-19}, 
   one can easily obtain from the boundedness of $f'$ that 
   $\displaystyle \lim_{\bar\sigma\to+\infty}R_c(\bar\sigma)= +\infty$
   and $\mathcal F(+\infty)>0$. 
 Hence 
 we can find that there exists a unique  $\sigma^*>\tilde\sigma$ such that 
\begin{equation}\label{2.16}
 F(R_c(\bar\sigma))= \mathcal F(\bar\sigma)
  \left\{
  \begin{array}{ll}
  >0,\qquad  \bar\sigma>\sigma^*,
  \\ 
   =0, \qquad \bar\sigma=\sigma^*,
   \\ 
   <0,\qquad \sigma_Q<\bar\sigma<\sigma^*.
\end{array}
\right.
\end{equation}
Therefore, we conclude that, for $\bar\sigma\ge\sigma^*$, the equation $F(R)=0$ has a unique solution 
$R_s\ge R_c$, whereas for $\sigma_Q\le\bar\sigma<\sigma^*$, the equation $F(R)=0$ has no solutions in $[R_c,+\infty)$.
\end{proof}

\medskip

 From Lemma \ref{lem2.1} and Lemma \ref{lem2.2},   we see that for any given 
 $R\in (0,R_c(\bar\sigma))$,   the equations $(\ref{2.2})_1$--$(\ref{2.2})_2$ admit a 
 unique solution $\sigma(r)=W(r,R)$ with $\sigma_Q<W(r,R)\le \bar\sigma$. Note that
 $W(r,R)$ is strictly increasing  and convex in $r$, and strictly decreasing
 in $R$.  Define
\begin{equation}\label{HR}
  F(R):={1\over R^3}\int_0^R g(W(r,R)) r^2 dr\qquad\mbox{for} \;\; 0<R<R_c.
\end{equation}
  Clearly, combining (\ref{2.10}) and (\ref{HR}) we know that $F(R)$ is continuous and strictly decreasing for $R\in(0,+\infty)$.
 Moreover, we have for $R< R_c$,
\begin{equation}\label{2.19}
  F'(R)<0,\qquad
  \lim_{R\to0^+}F(R)={1\over3}g(\bar\sigma)
  \left\{
  \begin{array}{l}
  >0,\quad \bar\sigma>\tilde\sigma,
  \\
  \le 0,\quad \bar\sigma\le\tilde\sigma.
  \end{array}
  \right.
\end{equation}
  Therefore, equation  $F(R)=0$ has a unique root $R_s\in (0,R_c)$ if and 
  only if 
  $\tilde\sigma<\bar\sigma<\sigma^*$.
\medskip

  Since the stationary problem of (1.1) is equivalent to equation $F(R)=0$, 
   we immediately obtain from Lemmas \ref{lem2.1}--\ref{lem2.4} that


\begin{thm}\label{thm2.5}
Suppose $(\rm A1)$--$(\rm A3)$ hold and let $\sigma^*\in(\tilde\sigma,+\infty)$ be ensured in Lemma $\textsl{\ref{lem2.4}}$. Then
  
  $(i)$ If $\bar\sigma>\sigma^*$, problem $(\ref{1.1})$ has a unique stationary 
  solution  $(\sigma_s(r), \rho_s, R_s)$ with a quiescent core, where 
  $\rho_s=\rho(R_s)$ is the quiescent core radius and $R_s$ 
  is the unique root of $F(R)=0$ in $(R_c,+\infty)$.

  $(ii)$ If $\tilde\sigma<\bar\sigma\le \sigma^*$, problem
  $(\ref{1.1})$ has a unique stationary solution with
  only proliferating cells $(\sigma_s(r),R_s)$, where 
  $R_s$ is the unique root of $F(R)=0$ in $(0,R_c]$.

  $(iii)$ If $\bar\sigma\le \tilde\sigma$, problem $(\ref{1.1})$ has only 
  trivial stationary solution with $R_s=0$. 
\end{thm}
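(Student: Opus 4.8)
The plan is to reduce Theorem \ref{thm2.5} entirely to the scalar equation $F(R)=0$ and then read off the three cases from the sign pattern of $F$ at the endpoints $R\to 0^+$, $R=R_c$ and $R\to+\infty$. Recall from the construction that a quiescent-core stationary solution corresponds, via Lemmas \ref{lem2.2}--\ref{lem2.3}, to a root $R\ge R_c$ of $F$ defined by \eqref{2.10}, while a proliferating-only stationary solution corresponds to a root $R\in(0,R_c)$ of $F$ defined by \eqref{HR}; the borderline $R=R_c$ is the limiting profile with $\rho=0$ and $\sigma(0)=\sigma_Q$, which I count as proliferating-only. First I would record the single global fact that drives everything: $F$ is continuous and strictly decreasing on $(0,+\infty)$. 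Strict monotonicity on each subinterval is already contained in \eqref{2.13} and \eqref{2.19}, so the only thing to verify is that the two branches match continuously at $R_c$; this follows because $\eta(R_c)=0$ (Lemma \ref{lem2.3}$(iii)$), so the $\nu$-term in \eqref{2.10} vanishes and the quiescent-core profile degenerates to the proliferating-only profile $W(\cdot,R_c)$ with $\sigma(0)=\sigma_Q$, making both expressions for $F(R_c)$ agree.

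Next I would assemble the boundary values. From \eqref{2.12}, $\lim_{R\to+\infty}F(R)=-\nu/3<0$; from \eqref{2.16}, $F(R_c)=\mathcal F(\bar\sigma)$ is positive, zero or negative according as $\bar\sigma>\sigma^*$, $\bar\sigma=\sigma^*$ or $\sigma_Q<\bar\sigma<\sigma^*$; and from \eqref{2.19}, $\lim_{R\to 0^+}F(R)=\tfrac13 g(\bar\sigma)$, which is positive iff $\bar\sigma>\tilde\sigma$. Because $F$ is continuous and strictly decreasing, it has at most one zero, and a zero exists in a given interval precisely when $F$ changes sign across that interval.

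The three cases are then immediate. If $\bar\sigma>\sigma^*$, then $F(R_c)>0>\lim_{R\to+\infty}F(R)$, so the unique root $R_s$ lies in $(R_c,+\infty)$; since $F$ is decreasing and $F(R_c)>0$, there is no root below $R_c$, and $R_s$ yields a quiescent-core solution with $\rho_s=\rho(R_s)=\eta(R_s)R_s>0$. If $\tilde\sigma<\bar\sigma\le\sigma^*$, then $\lim_{R\to 0^+}F(R)>0\ge F(R_c)$ (with equality exactly at $\bar\sigma=\sigma^*$, giving $R_s=R_c$), so the unique root $R_s$ lies in $(0,R_c]$ and monotonicity rules out any root above $R_c$; this $R_s$ yields a proliferating-only solution. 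If $\bar\sigma\le\tilde\sigma$, then $\lim_{R\to 0^+}F(R)=\tfrac13 g(\bar\sigma)\le 0$, so strict monotonicity forces $F(R)<0$ for every $R>0$ and there is no positive root, leaving only the trivial stationary state $R_s=0$.

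The main obstacle is not the case analysis, which is routine once $F$ is known to be a single continuous strictly decreasing function, but rather the justification of exactly that gluing at $R_c$: one must confirm that the quiescent-core branch \eqref{2.10} and the proliferating-only branch \eqref{HR} join with matching value (and without a jump in monotonicity) as $\rho\to 0^+$, for which the vanishing of $\eta(R_c)$ together with the continuity statements in Lemmas \ref{lem2.1}--\ref{lem2.3} are essential. A secondary point needing care is the degenerate sub-case $\bar\sigma\le\sigma_Q$ inside case $(iii)$: there the proliferating region is empty and the whole tumor is quiescent, so the volume balance in \eqref{1.1} reduces to $R^2R'=-\tfrac{\nu}{3}R^3$ and directly forces $R_s=0$, consistent with $g(\bar\sigma)\le g(\tilde\sigma)=0$.
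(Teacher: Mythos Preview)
Your proposal is correct and follows essentially the same approach as the paper: both reduce the stationary problem to the scalar equation $F(R)=0$, assemble the global function $F$ on $(0,+\infty)$ from the branches \eqref{2.10} and \eqref{HR}, invoke its continuity and strict monotonicity (from \eqref{2.13}, \eqref{2.19} and the matching at $R_c$), and then read off the three cases from the signs of $F$ at $R\to 0^+$, $R=R_c$ and $R\to+\infty$ via \eqref{2.12}, \eqref{2.16} and \eqref{2.19}. Your treatment is in fact slightly more explicit than the paper's in two places---the continuity of $F$ at $R_c$ (the paper simply asserts it after \eqref{HR}) and the degenerate sub-case $\bar\sigma\le\sigma_Q$ within $(iii)$, which the paper leaves implicit---but these are elaborations within the same argument, not a different route.
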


In the sequel, we are interested in the stationary solution with a quiescent core. 
By using the monotonicity and the convexity of $V$, $\mathcal{V}$ and $U$ proved in
Lemmas \ref{lem2.1}--\ref{lem2.3},  we give some estimates for the steady tumor 
radius $R_s$ and for $\eta_s:=\rho_s/R_s$ with the quiescent core radius $\rho_s$.
\medskip


\begin{prop}\label{prop2.6}
 Assume that $\bar\sigma>\sigma^{*}$.  If there exists $\beta\in(\sigma_0,\sigma_Q)$ such that 
\begin{equation}\label{2.26} 
\int_{\beta}^{\bar\sigma}g(t)(t-\beta)^2dt=0,
\end{equation}
 and 
\begin{equation} \label{nu}
  \nu\ge\frac{3}{(\sigma_Q-\beta)^3}
  \int_{\beta}^{\sigma_Q}\vert g(t)\vert (t-\beta)^2dt,
\end{equation}
then we have
\begin{equation}\label{2.27}
\eta_s\le\eta_{\beta}:=\frac{\sigma_Q-\beta}{\bar\sigma-\beta},
\end{equation}
 where $\eta_s=\eta(R_s)$, and
\begin{equation}\label{2.28}
\frac{6(\bar\sigma-\sigma_Q)}{f(\bar\sigma)}\le R_s^2\le\frac{6(\bar\sigma-\beta)}{f(\sigma_Q)(1-\eta_{\beta}^3)}.
\end{equation}
\end{prop}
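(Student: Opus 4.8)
The plan is to prove the two assertions of Proposition~\ref{prop2.6} in order. Throughout, $\sigma_s=\Sigma(\cdot,\rho_s,R_s)$ denotes the (unique, since $\bar\sigma>\sigma^*$) steady profile with quiescent core, and I will lean on two structural facts already established: the strict convexity of $\sigma_s$ together with $\sigma_s''\ge \sigma_s'/r$ (equivalently $\sigma_s'/r$ is increasing on $(0,R_s]$) from Lemmas~\ref{lem2.1}--\ref{lem2.3}, and the representation $r^2\sigma_s'(r)=\int_0^{\rho_s}s^2h(V(s,\rho_s))\,ds+\int_{\rho_s}^r s^2 f(\sigma_s)\,ds$ coming from \eqref{Vr}. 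First I would prove \eqref{2.27}, then feed $\eta_s\le\eta_\beta$ into the radius estimates \eqref{2.28}.

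For \eqref{2.27} I argue by contradiction: suppose $\eta_s>\eta_\beta$. Introduce the affine profile $\sigma_L(r)=\beta+\frac{\bar\sigma-\beta}{R_s}\,r$, so that $\sigma_L(R_s)=\bar\sigma=\sigma_s(R_s)$ and $\sigma_L(\rho_s)=\beta+(\bar\sigma-\beta)\eta_s>\sigma_Q=\sigma_s(\rho_s)$. The change of variable $t=\sigma_L(r)$ turns \eqref{2.26} into $\int_0^{R_s}g(\sigma_L)r^2\,dr=0$. Since $\sigma_s-\sigma_L$ is convex on $[\rho_s,R_s]$ and nonpositive at both endpoints, it is nonpositive throughout (a convex function attains its maximum at an endpoint), so $g(\sigma_s)\le g(\sigma_L)$ there; combining this with the steady balance $\int_{\rho_s}^{R_s}g(\sigma_s)r^2\,dr=\frac{\nu}{3}\rho_s^3$ and $\int_0^{R_s}g(\sigma_L)r^2\,dr=0$ gives $\frac{\nu}{3}\rho_s^3\le-\int_0^{\rho_s}g(\sigma_L)r^2\,dr$. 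Applying $t=\sigma_L(r)$ once more and writing $\tau:=\sigma_L(\rho_s)>\sigma_Q$ reduces this to $\nu\le Q(\tau)$, where $Q(\tau):=\frac{-3}{(\tau-\beta)^3}\int_\beta^\tau g(t)(t-\beta)^2\,dt$. The decisive point is that $Q$ is strictly decreasing: differentiation shows $Q'(\tau)$ has the same sign as $(-g(\tau))\frac{(\tau-\beta)^3}{3}-\int_\beta^\tau(-g(t))(t-\beta)^2\,dt$, which is negative because $-g$ is strictly decreasing by $(\rm A2)$. Since \eqref{nu} reads exactly $\nu\ge Q(\sigma_Q)$ and $\tau>\sigma_Q$, we obtain $\nu\le Q(\tau)<Q(\sigma_Q)\le\nu$, a contradiction; hence $\eta_s\le\eta_\beta$.

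For \eqref{2.28} I use the two-sided bound $f(\sigma_Q)\le f(\sigma_s)\le f(\bar\sigma)$ on $[\rho_s,R_s]$ together with $h(V)\le h(\sigma_Q)\le f(\sigma_Q)\le f(\bar\sigma)$ from $(\rm A1)$. For the lower bound, $R_s^2\sigma_s'(R_s)\le\frac13 f(\bar\sigma)R_s^3$ and the inequality $\sigma_s'(r)\le\frac{\sigma_s'(R_s)}{R_s}r$ (from $\sigma_s'/r$ increasing) give $\bar\sigma-\sigma_Q=\int_{\rho_s}^{R_s}\sigma_s'\le\frac12\sigma_s'(R_s)R_s\le\frac16 f(\bar\sigma)R_s^2$, i.e. the left inequality. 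For the upper bound, dropping the core term and using $f(\sigma_s)\ge f(\sigma_Q)$ yields $\sigma_s'(r)\ge\frac{f(\sigma_Q)}{3}\frac{r^3-\rho_s^3}{r^2}$, and integrating over $[\rho_s,R_s]$ gives $\bar\sigma-\sigma_Q\ge\frac{f(\sigma_Q)R_s^2}{6}(1-\eta_s)^2(1+2\eta_s)$. I then convert this to the stated form via the identity $\bar\sigma-\beta=\frac{\bar\sigma-\sigma_Q}{1-\eta_\beta}$ and the elementary chain $(1-\eta_s)^2(1+2\eta_s)\ge(1-\eta_\beta)^2(1+2\eta_\beta)\ge(1-\eta_\beta)^2(1+\eta_\beta+\eta_\beta^2)=(1-\eta_\beta)(1-\eta_\beta^3)$, valid because $\eta_s\le\eta_\beta$ and $(1-\eta)^2(1+2\eta)$ is decreasing on $(0,1)$; this produces $\bar\sigma-\beta\ge\frac{f(\sigma_Q)R_s^2}{6}(1-\eta_\beta^3)$, the right inequality.

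The main obstacle is the first part. Pure convexity gives no upper bound on $\sigma_s'(R_s)$, so one cannot estimate $\eta_s$ by comparing slopes; the workable route is to pick the comparison line $\sigma_L$ precisely so that \eqref{2.26} forces its $g$-integral to vanish, and then to prove the strict monotonicity of the averaged-consumption functional $Q$, which is exactly what allows \eqref{nu} to close the contradiction. Once $\eta_s\le\eta_\beta$ is in hand the radius bounds are routine convexity computations; the one subtle point there is preserving the correct algebraic form so that the crude factor $(1-\eta_s)^2(1+2\eta_s)$ upgrades, through the identity $\bar\sigma-\beta=(\bar\sigma-\sigma_Q)/(1-\eta_\beta)$, to the sharper $1-\eta_\beta^3$.
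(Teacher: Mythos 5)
Your proof is correct, but it is organized rather differently from the paper's. For \eqref{2.27} the paper does not argue by contradiction: it picks the radius $R_\beta$ with $\eta(R_\beta)=\eta_\beta$ (available by Lemma \ref{lem2.3}\,$(iii)$), compares the solution $u_\beta$ of \eqref{2.7} with the chord $\varphi(s)=(\bar\sigma-\beta)s+\beta$ joining $(\eta_\beta,\sigma_Q)$ to $(1,\bar\sigma)$, and shows $F(R_\beta)<0$ directly from \eqref{2.26} and \eqref{nu}; the monotonicity $F'(R)<0$ and $\eta'(R)>0$ then give $R_s<R_\beta$ and hence $\eta_s\le\eta_\beta$. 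You instead work at the actual steady state, compare $\sigma_s$ with the line through $(0,\beta)$ and $(R_s,\bar\sigma)$ (which lies above $(\rho_s,\sigma_Q)$ under the contradiction hypothesis), and close the argument with the strict monotonicity of the averaged functional $Q(\tau)=-3(\tau-\beta)^{-3}\int_\beta^\tau g(t)(t-\beta)^2\,dt$ — an extra, correctly proved, lemma that the paper does not need because it anchors the comparison exactly at $\eta_\beta$. Your route buys independence from the monotonicity of $F$ and $\eta$ in $R$, using only the steady-state identity and convexity; the paper's is shorter given that this monotonicity is already in place from \eqref{2.13} and Lemma \ref{lem2.3}. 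For \eqref{2.28} the two arguments are essentially the same integral-representation computation: the paper carries it out at $R_c$ and $R_\beta$ and sandwiches $R_c<R_s<R_\beta$, whereas you carry it out at $R_s$ itself and then upgrade $(1-\eta_s)^2(1+2\eta_s)$ to $(1-\eta_\beta)(1-\eta_\beta^3)$ via the fact that $(1-\eta)^2(1+2\eta)$ is decreasing on $(0,1)$; both chains are valid and yield the stated bounds.
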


 

\begin{proof}
 Since $\eta_\beta\in (0,1)$, it follows from Lemma \ref{lem2.3} $(iii)$ that 
  there exists a unique $R_{\beta}>R_c$ such that $\eta(R_{\beta})=\eta_{\beta}$.
  Denote by 
  $u_{\beta}(s)=U(s,\eta(R_{\beta}), R_{\beta})$ $(\eta_\beta\le s
  \le 1)$ 
  the corresponding solution of problem (\ref{2.7}) with $R=R_\beta$.

  Define $\varphi(s):=(\bar\sigma-\beta)s+\beta$ for $\eta_\beta\le s
  \le 1$. 
  From $\varphi(\eta_\beta)=\sigma_Q$
  and $u''_\beta(s)>0$,
  we have $u_\beta(s)\le \varphi(s)$. Then by using $(\rm A2)$, $(\rm A3)$,  $(\ref{2.26})$ and 
  $(\ref{nu})$,
we get
\begin{equation*}
\begin{matrix}
\begin{aligned}
    F(R_{\beta})&=\int_{\eta_{\beta}}^1 g(u_\beta(s))s^2ds
    -{\nu\over3}\eta^3_{\beta}  \\
    &< \int_{\eta_{\beta}}^1 g(\varphi(s))s^2ds-\frac{1}
    {(\bar\sigma-\beta)^3}\int_{\beta}^{\sigma_Q}\vert g(t)\vert (t-\beta)^2dt 
    \\
    &\le\frac{1}{(\bar\sigma-\beta)^3}\int_{\sigma_Q}^{\bar\sigma}g(t)(t-\beta)^2dt-\frac{1}{(\bar\sigma-\beta)^3}\int_{\beta}^{\sigma_Q}\vert g(t)\vert (t-\beta)^2dt\\
&\le\frac{1}{(\bar\sigma-\beta)^3}\int_{\beta}^{\bar\sigma}g(t)(t-\beta)^2dt=0,
\end{aligned} 

\end{matrix}
\end{equation*}
  It follows that $R_c<R_s<R_{\beta}$. Then the monotonicity of $\eta(R)$ ensured by Lemma \ref{lem2.3} $(iii)$ yields the estimate $(\ref{2.27})$.
 
 By recalling $\eta(R_c)=0$ we obtain
$$
\sigma_Q=U(0,0,R_c)=\bar\sigma-R^2_c\int_0^1\frac{1}{\alpha^2}\int_0^{\alpha}s^2f(U(s,0,R_c))ds d\alpha\ge\bar\sigma-\frac{1}{6}R^2_cf(\bar\sigma).
$$
  By a direct computation,  from $(\rm A1)$ and 
  $u_\beta(s)\ge\sigma_Q$ for $\eta_\beta\le s\le 1$, 
  we have 
\begin{equation*}
\begin{matrix}
\begin{aligned}
  \bar\sigma&=\sigma_Q+R^2_{\beta}(\frac{1}{\eta_{\beta}}-1) \int_0^{\eta_{\beta}}  
  s^2h(V(sR_{\beta},\eta_\beta R_\beta))ds
  +R^2_{\beta}\int_{\eta_{\beta}}^1\frac{1}{\alpha^2}\int_{\eta_{\beta}}^{\alpha}
  s^2f(u_{\beta}(s))ds d\alpha
  \\
  &>\sigma_Q+R^2_{\beta}f(\sigma_Q)
  \int_{\eta_{\beta}}^1\frac{1}  
  {\alpha^2}\int_{\eta_{\beta}}^{\alpha}s^2ds d\alpha
  \\
 &=\sigma_Q+\frac{1}{6}R^2_{\beta}f(\sigma_Q)(1-\eta_{\beta})^2(1+2\eta_{\beta})\\
&\ge\sigma_Q+\frac{1}{6}R^2_{\beta}f(\sigma_Q)(1-\eta^3_{\beta})(1-\eta_{\beta})
\\
&=\sigma_Q+\frac{1}{6}R^2_{\beta}f(\sigma_Q)(1-\eta^3_{\beta})
  {\bar\sigma-\sigma_Q\over \bar\sigma-\beta }.
\end{aligned} 
\end{matrix}
\end{equation*}
  Then estimate $(\ref{2.28})$ follows immediately from 
$R_c^2<R_s^2<R_{\beta}^2$.    
\end{proof}



In linear function case $g(\sigma)=\mu(\sigma-\tilde\sigma)$, if $\sigma_0<4\tilde\sigma-3\bar\sigma<\sigma_Q$, we can easily verify that 
(\ref{2.26}) is satisfied by taking $\beta=4\tilde\sigma-3\bar\sigma$.
  
 For the case where (\ref{2.26}) does not hold,  we provide the following
estimates.

 \medskip


\begin{prop}\label{prop2.7}
 Assume that  $\bar\sigma>\sigma^{*}$. If there exists 
  $\delta\in(\sigma_0,\sigma_Q)$ such that  
\begin{equation}
\nu\ge\frac{3}{(\sigma_Q-\delta)^3}\int_{\sigma_Q}^{\bar\sigma}g(t)(t-\delta)^2dt,
\end{equation}
then we have
\begin{equation}\label{2.31}
\eta_s\le\eta_{\delta}:=\frac{\sigma_Q-\delta}{\bar\sigma-\delta},
\end{equation}

\begin{equation}\label{2.32}
\frac{6(\bar\sigma-\sigma_Q)}{f(\bar\sigma)}\le R_s^2\le
\frac{6(\bar\sigma-\delta)}{f(\sigma_Q)(1-\eta_{\delta}^2)}.
\end{equation}
\end{prop}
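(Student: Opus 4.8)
The plan is to run the argument of Proposition \ref{prop2.6} almost verbatim with $\delta$ in place of $\beta$, the simplification being that the single hypothesis here already controls the one relevant integral, so no splitting of $g$ into its positive and negative parts on $(\delta,\sigma_Q)$ is needed. First, since $\eta_\delta\in(0,1)$, Lemma \ref{lem2.3}$(iii)$ furnishes a unique $R_\delta>R_c$ with $\eta(R_\delta)=\eta_\delta$; write $u_\delta(s):=U(s,\eta(R_\delta),R_\delta)$ for the corresponding solution of $(\ref{2.7})$. The key comparison is again a chord estimate: the affine function $\varphi(s):=(\bar\sigma-\delta)s+\delta$ satisfies $\varphi(\eta_\delta)=\sigma_Q=u_\delta(\eta_\delta)$ and $\varphi(1)=\bar\sigma=u_\delta(1)$, so by the strict convexity of $u_\delta$ from Lemma \ref{lem2.3}$(ii)$ we obtain $u_\delta(s)\le\varphi(s)$ on $[\eta_\delta,1]$, with strict inequality in the interior.

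Next I would estimate $F(R_\delta)=\int_{\eta_\delta}^1 g(u_\delta(s))s^2ds-\tfrac{\nu}{3}\eta_\delta^3$. Using that $g$ is increasing together with $u_\delta\le\varphi$, and then the substitution $t=\varphi(s)$ (so that $s^2ds=(t-\delta)^2\,dt/(\bar\sigma-\delta)^3$, with $s=\eta_\delta\mapsto t=\sigma_Q$ and $s=1\mapsto t=\bar\sigma$), the first term is bounded above by $\frac{1}{(\bar\sigma-\delta)^3}\int_{\sigma_Q}^{\bar\sigma}g(t)(t-\delta)^2dt$. Since $\eta_\delta^3=(\sigma_Q-\delta)^3/(\bar\sigma-\delta)^3$, the hypothesis on $\nu$ reads precisely $\tfrac{\nu}{3}\eta_\delta^3\ge\frac{1}{(\bar\sigma-\delta)^3}\int_{\sigma_Q}^{\bar\sigma}g(t)(t-\delta)^2dt$, whence $F(R_\delta)<0$. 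Because $F$ is strictly decreasing by $(\ref{2.13})$ and $F(R_s)=0$, this forces $R_c<R_s<R_\delta$, and the monotonicity of $\eta(R)$ in Lemma \ref{lem2.3}$(iii)$ yields $\eta_s=\eta(R_s)<\eta(R_\delta)=\eta_\delta$, which is $(\ref{2.31})$.

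For $(\ref{2.32})$, the lower bound is identical to Proposition \ref{prop2.6}: evaluating the integral form of $U$ at $s=0$ with $\eta(R_c)=0$ and using $U\le\bar\sigma$ gives $R_c^2\ge 6(\bar\sigma-\sigma_Q)/f(\bar\sigma)$, and $R_s>R_c$ finishes it. For the upper bound I would write $\bar\sigma-\sigma_Q$ as the integral of the nutrient increment across $[\eta_\delta,1]$, discard the nonnegative quiescent-core term, and bound $f(u_\delta)\ge f(\sigma_Q)$; computing the remaining double integral gives $\bar\sigma-\sigma_Q>\tfrac16 R_\delta^2 f(\sigma_Q)(1-\eta_\delta)^2(1+2\eta_\delta)$. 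The one new algebraic point relative to Proposition \ref{prop2.6} is the factorization: here I use the weaker estimate $(1-\eta_\delta)^2(1+2\eta_\delta)\ge(1-\eta_\delta)^2(1+\eta_\delta)=(1-\eta_\delta)(1-\eta_\delta^2)$ together with $1-\eta_\delta=(\bar\sigma-\sigma_Q)/(\bar\sigma-\delta)$, which produces the factor $1-\eta_\delta^2$ and hence $R_\delta^2<6(\bar\sigma-\delta)/[f(\sigma_Q)(1-\eta_\delta^2)]$; combined with $R_s<R_\delta$ this gives $(\ref{2.32})$.

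The only step requiring a little care is establishing $F(R_\delta)<0$, i.e. matching the $\nu$-hypothesis to the chord-substitution; but precisely because no positive/negative splitting of $g$ on $(\delta,\sigma_Q)$ is needed here, this step is in fact cleaner than its analogue in Proposition \ref{prop2.6}, and the argument is essentially a streamlined variant of the previous proof. I expect no genuine obstacle beyond the bookkeeping of these substitutions and the elementary factorization inequality.
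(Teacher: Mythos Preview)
Your proposal is correct and follows essentially the same approach as the paper: choose $R_\delta$ with $\eta(R_\delta)=\eta_\delta$, compare $u_\delta$ with the chord $\varphi(s)=(\bar\sigma-\delta)s+\delta$ by convexity, use the substitution $t=\varphi(s)$ together with the $\nu$-hypothesis to get $F(R_\delta)<0$, and then derive the radius bounds exactly as you describe, including the factorization $(1-\eta_\delta)^2(1+2\eta_\delta)\ge(1-\eta_\delta)(1-\eta_\delta^2)$ for the upper bound. The paper's proof is simply a terser version of what you have written.
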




\begin{proof}
Note that $\eta_\delta\in(0,1)$. Then there exists a $R_{\delta}>R_c$ 
such that $\eta(R_{\delta})=\eta_{\delta}$.  Let 
$\phi(s)=(\bar\sigma-\delta)s+\delta$ and 
$u_\delta(s)=U(s,\eta(R_{\delta}),R_{\delta})$.  
From $\phi(\eta_\delta)=\sigma_Q$, $\phi(1)=\bar\sigma$ and the 
convexity of $u_\delta(s)$,
we see that $u_\delta(s)<\phi(s)$ for $\eta_\delta<s<1$, then
\begin{equation*}
\begin{matrix}
\begin{aligned}
    F(R_{\delta})=\int_{\eta_{\delta}}^1 g(u_{\delta}(s))s^2ds
    -{\nu\over3}\eta^3_{\delta} 
    < \int_{\eta_{\delta}}^1 g(\phi(s))s^2ds-
    \frac{1}{(\bar\sigma-\delta)^3}
    \int_{\sigma_Q}^{\bar\sigma}g(t)(t-\delta)^2dt 
=0.
\end{aligned} 
\end{matrix}
\end{equation*}
This implies  $R_c<  R_s < R_\delta$,  and consequently, \eqref{2.31} holds.
Similarly, we have
\begin{equation*}
  \bar\sigma\ge\sigma_Q+\frac{1}{6}R^2_{\delta}f(\sigma_Q)(1-\eta_{\delta})^2(1+2\eta_{\delta})
\ge\sigma_Q+\frac{1}{6}R^2_{\delta}f(\sigma_Q)(1-\eta^2_{\delta})
{\bar\sigma-\sigma_Q\over \bar\sigma-\delta}.
\end{equation*}   
Thus the estimate (\ref{2.32}) follows.    
\end{proof}

\medskip

\begin{rem}
By the convexity, we can also give an improved lower bound for $\sigma^*$. 
Since $g$ is strictly increasing and $g(\sigma_Q)<g(\tilde\sigma)=0$, the following function 
\begin{equation}
    G(\bar\sigma):= \int_{\sigma_Q}^{\bar\sigma} g(t)(t-\sigma_Q)^2 dt \qquad\mbox{for}\;\; \bar\sigma>\sigma_Q,
\end{equation}
  has a unique root denoted by $\bar\sigma_g$.  
  Clearly, $\bar\sigma_g>\tilde\sigma$. 

 Let $\psi(s):=\sigma_Q+(\bar\sigma_g-\sigma_Q)s$
 for $0\le s\le 1$. By the convexity of $Y(s,\bar\sigma_g)$ in $s$, we have $Y(s,\bar\sigma_g)< \psi(s)$ for $0<s<1$. Then
$$
  \mathcal F(\bar\sigma_g)=\int_0^1 g(Y(s,\bar\sigma_g))s^2ds< \int_0^1 g(\psi(s))s^2ds
  ={1\over (\bar\sigma_g-\sigma_Q)^3}G(\bar\sigma_g)=0=\mathcal F(\sigma^*).
$$
  It readily follows from $\mathcal F'(\bar\sigma)>0$ that  $\sigma^*>\bar\sigma_g$.    
\end{rem}


 \medskip
 \hskip 1em

\section{Asymptotic behavior}\label{3}
 \setcounter{equation}{0}
\hskip 1em

  In this section, we study the asymptotic behavior of the transient solution of 
  free boundary problem (\ref{1.1}). We first show the 
  global existence and uniqueness of the solution.
  
\medskip

\begin{thm}\label{thm3.1}
    Suppose $(\rm A1)$--$(\rm A3)$ and $\bar\sigma>\sigma_0$
  hold. For any $R_0>0$,  problem $(\ref{1.1})$ 
  has a unique global-in-time transient solution $(\sigma(r,t), R(t))$ which exists for $0\le r\le R(t)$ and $t\ge 0$. Moreover,
  \begin{equation}\label{3.4}
  R_0{\rm e}^{-\nu t/3}\le R(t)\le R_0{\rm e}^{g(\bar\sigma)t/3}
  \qquad\mbox{for}\;\;t>0.
  \end{equation}
\end{thm}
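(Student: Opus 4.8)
The plan is to exploit the fact that the nutrient equation in (\ref{1.1}) is quasi-steady: it carries no time derivative, so at each instant $t$ the profile $\sigma(\cdot,t)$ is frozen to the \emph{unique} solution of the elliptic problem with tumor radius $R(t)$, which is exactly what Lemmas \ref{lem2.1}--\ref{lem2.3} construct. Thus for $R(t)\ge R_c$ we have $\sigma(\cdot,t)=\Sigma(\cdot,\rho(R(t)),R(t))$ with quiescent core $\rho(R(t))=\eta(R(t))R(t)$, while for $0<R(t)<R_c$ we have $\sigma(\cdot,t)=W(\cdot,R(t))$ with no core. Substituting either profile into the kinetic relation $(\ref{1.1})_3$ collapses the entire free boundary problem to the single autonomous Cauchy problem
\begin{equation*}
\frac{dR}{dt}=R\,F(R),\qquad R(0)=R_0,
\end{equation*}
where $F$ is exactly the reduced function defined in (\ref{2.10}) and (\ref{HR}). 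The theorem therefore reduces to the well-posedness of this scalar ODE together with the two-sided estimate (\ref{3.4}).

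For local existence and uniqueness I would verify that the field $R\mapsto R\,F(R)$ is locally Lipschitz on $(0,+\infty)$. On each of the intervals $(0,R_c)$ and $(R_c,+\infty)$ this follows from the smooth dependence of $\sigma$, $\rho$ and $\eta$ on $R$ already quantified in the lemmas (the computation of $\partial u/\partial R$ and $\eta'(R)$ in (\ref{2.8})--(\ref{2.9}), and $F'(R)<0$ in (\ref{2.13})); at $R_c$ the continuity and strict monotonicity of $F$ glue the two branches. The Cauchy--Lipschitz (Picard) theorem then yields a unique solution on a maximal interval $[0,T_{\max})$ that stays strictly positive.

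For global existence I would close the argument with the a priori bounds (\ref{3.4}). The maximum principle (as used throughout Section \ref{2}) gives $\sigma_0<\sigma(r,t)\le\bar\sigma$, whence $g(\sigma_Q)\le g(\sigma(r,t))\le g(\bar\sigma)$ on the proliferating region. Writing $R^2\dot R=\int_\rho^R g(\sigma)r^2\,dr-\tfrac{\nu}{3}\rho^3$ and using $g(\sigma_Q)+\nu\ge 0$ from $(\rm A3)$ for the lower estimate, and $g(\sigma)\le g(\bar\sigma)$ together with $(g(\bar\sigma)+\nu)\rho^3\ge 0$ for the upper one, I obtain
\begin{equation*}
-\frac{\nu}{3}R^3\le R^2\frac{dR}{dt}\le \frac{g(\bar\sigma)}{3}R^3.
\end{equation*}
Comparison with the linear ODEs $\dot R=-\tfrac{\nu}{3}R$ and $\dot R=\tfrac{g(\bar\sigma)}{3}R$ produces (\ref{3.4}). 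These bounds confine $R(t)$ to a compact subinterval of $(0,+\infty)$ on every finite time interval, ruling out both blow-up and extinction in finite time, so $T_{\max}=+\infty$.

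I expect the main obstacle to be the regularity of the reduced field $F$ across the critical radius $R_c$, where the inner free interface first appears and the elliptic problem changes its structure (from one phase to two). While the lemmas give $C^1$ regularity on either side and continuity at $R_c$, controlling the difference quotients there---equivalently, ruling out a loss of Lipschitz continuity as $\eta(R)\to 0^+$---is the point that requires genuine care; if strict Lipschitz continuity is unavailable at $R_c$, uniqueness of trajectories crossing $R_c$ can instead be recovered from the strict monotonicity $F'<0$ via a one-sided comparison argument.
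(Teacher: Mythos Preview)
Your proposal is correct and follows essentially the same route as the paper: reduce (\ref{1.1}) to the scalar Cauchy problem $\dot R=R\,F(R)$ via Lemmas \ref{lem2.1}--\ref{lem2.3}, and use the two-sided bound $-\nu/3\le F(R)\le g(\bar\sigma)/3$ to conclude global existence and (\ref{3.4}). Two minor remarks: the paper first disposes separately of the degenerate range $\sigma_0<\bar\sigma\le\sigma_Q$ (there the tumor is entirely quiescent, so $R(t)=R_0e^{-\nu t/3}$ directly), whereas your reduction via $F$ implicitly assumes $\bar\sigma>\sigma_Q$; and the paper simply asserts unique global solvability of the reduced ODE without discussing regularity of $F$ at $R_c$, so the concern you raise about Lipschitz continuity across the phase-change radius is in fact treated more carefully in your outline than in the original.
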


 

 \begin{proof}
     
 $(i)$ If $\sigma_0<\bar\sigma\le \sigma_Q$, the first two lines of 
 problem $(\ref{1.1})$ can be rewritten as     
\begin{equation}\label{3.1}
\left\{
\begin{array}{l}
\Delta_r\sigma=f(\sigma)  \qquad\mbox{for}\;\; 0<r<R(t),\;\;t>0,

\\[0.3cm]
\sigma_r(0,t)=0, \;\; \sigma(R(t),t)=\bar\sigma  \qquad\mbox{for}\;\;t>0,

\end{array}
\right.
\end{equation}
By a comparison principle, we have 
$$
  \sigma_0<\sigma(r,t)\le\bar\sigma\le\sigma_Q \qquad
  \mbox{for}\;\; 0\le r\le R(t),\;\;t\ge0.
$$
The above relation enables us to reduce problem (\ref{1.1}) into the following problem
 \begin{equation}\label{3.2}
   {d R(t)\over dt}=-{\nu\over3} R(t)\quad\mbox{for}\;\;t>0,  \qquad R(0)=R_0.
 \end{equation}
Obviously,  $R(t)=R_0{\rm e}^{-\nu t/3}$ and $\displaystyle\lim_{t\to+\infty}R(t)=0$.

   $(ii)$ If $\bar\sigma>\sigma_Q$, we know from Lemmas \ref{lem2.1}--\ref{lem2.3} that    
   problem $(\ref{1.1})$ 
   is equivalent to the following Cauchy problem 
\begin{equation}\label{3.3}
   {d R(t)\over dt}=R(t)F(R(t))\quad\mbox{for}\;\;t>0,  \qquad R(0)=R_0,
\end{equation}
where $F(R)$ is defined by (\ref{2.10}) 
   and (\ref{HR}). 
  Since $ -{1\over3}\nu  \le F(R)\le {1\over3}g(\bar\sigma)$,
  we readily see that problem (\ref{3.3}) has a unique global solution $R(t)$
   for any $R_0>0$, and moreover, \eqref{3.4} holds. 
\end{proof}
  

From (\ref{3.4}) and Theorem \ref{thm2.5} $(iii)$, we see that if 
$\bar\sigma\le \tilde\sigma$, then for any $R_0>0$, the corresponding solution 
$R(t)$ finally vanishes as $t$ goes to $+\infty$.

For $\bar\sigma>\tilde\sigma$, we know from $(\ref{2.13})$ and $(\ref{2.19})$
that $F'(R)<0$ for all $R>0$ and from Theorem \ref{thm2.5} $(i)$--$(ii)$ that $R_s$ uniquely exists. Then 
the classical
ODE theory tells us that, for any $R_0>0$, 
the corresponding solution $R(t)$
will converge to $R_s$ exponentially fast.

In conclusion, we have the following theorem:
\medskip

\begin{thm}\label{thm3.2} The dynamical behavior for problem \eqref{1.1} is as follows:
 
  $(i)$ If $\sigma_0<\bar\sigma\le \tilde\sigma$, 
  then for any $R_0>0$, $\displaystyle \lim_{t\to+\infty}R(t)=0$, 
  and the tumor will finally disappear. 
    
  $(ii)$ If $\tilde\sigma<\bar\sigma\le\sigma^*$, then for any $R_0>0$, 
   $\displaystyle\lim_{t\to+\infty}R(t)=R_s$, and the tumor will 
   finally converge to the proliferating stationary tumor with
    radius $R_s$ containing only proliferating cells.
   
   $(iii)$ If  $\bar\sigma>\sigma^*$, then for any $R_0>0$, 
   $\displaystyle\lim_{t\to+\infty}R(t)=R_s$ and 
   $\displaystyle\lim_{t\to+\infty}\rho(t):=\lim_{t\to+\infty}\rho(R(t)) =\rho_s$, 
   and the tumor will finally converge to the 
   proliferating-quiescent stationary tumor with radius $R_s$, 
   containing a quiescent core with radius $\rho_s$.   
\end{thm}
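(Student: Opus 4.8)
The plan is to exploit the reduction---already carried out in Theorem \ref{thm3.1}---of the free boundary problem \eqref{1.1} to a scalar autonomous ordinary differential equation for the radius, and then to read off the long-time behavior by a phase-line analysis. Recall that for $\sigma_0<\bar\sigma\le\sigma_Q$ the radius solves the linear equation \eqref{3.2} explicitly, whereas for $\bar\sigma>\sigma_Q$ it solves the Cauchy problem \eqref{3.3}, namely $dR/dt=R\,F(R)$. The structural input I would use is that $F$ is continuous and strictly decreasing on $(0,+\infty)$ (by \eqref{2.13} and \eqref{2.19}), together with its two one-sided limits $\lim_{R\to0^+}F(R)=\tfrac13 g(\bar\sigma)$ from \eqref{2.19} and $\lim_{R\to+\infty}F(R)=-\nu/3<0$ from \eqref{2.12}. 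Since $R\,F(R)$ has the same sign as $F(R)$ for $R>0$, the equilibrium structure of \eqref{3.3} is completely governed by the signs of these limits and the monotonicity of $F$, while the location of the unique positive root $R_s$ is supplied by Theorem \ref{thm2.5}.

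For part $(i)$, with $\sigma_0<\bar\sigma\le\tilde\sigma$, I would split into two sub-cases. If $\sigma_0<\bar\sigma\le\sigma_Q$, then Theorem \ref{thm3.1}$(i)$ already gives $R(t)=R_0\,e^{-\nu t/3}$, so $R(t)\to0$. If instead $\sigma_Q<\bar\sigma\le\tilde\sigma$, then $g(\bar\sigma)\le g(\tilde\sigma)=0$, so $\lim_{R\to0^+}F(R)=\tfrac13 g(\bar\sigma)\le0$, and the strict decrease of $F$ forces $F(R)<0$ for every $R>0$. Hence $dR/dt=R\,F(R)<0$, so $R(t)$ is strictly decreasing and bounded below by $0$; its limit $R_\infty\ge0$ must be an equilibrium of \eqref{3.3}, and as $F$ has no positive root in this regime the only possibility is $R_\infty=0$.

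For parts $(ii)$ and $(iii)$, with $\bar\sigma>\tilde\sigma$, the limit $\lim_{R\to0^+}F(R)=\tfrac13 g(\bar\sigma)>0$ together with $\lim_{R\to+\infty}F(R)=-\nu/3<0$ and the continuity and strict decrease of $F$ yield a unique positive root $R_s$, with $F>0$ on $(0,R_s)$ and $F<0$ on $(R_s,+\infty)$. Thus $R_s$ is a globally attracting equilibrium of \eqref{3.3}: if $R_0<R_s$ the radius increases to $R_s$, if $R_0>R_s$ it decreases to $R_s$, and in either case $R(t)\to R_s$ for every $R_0>0$. Theorem \ref{thm2.5}$(ii)$ locates $R_s$ in $(0,R_c]$ when $\tilde\sigma<\bar\sigma\le\sigma^*$, so the limiting tumor contains only proliferating cells, giving part $(ii)$; Theorem \ref{thm2.5}$(i)$ locates $R_s$ in $(R_c,+\infty)$ when $\bar\sigma>\sigma^*$, so a quiescent core is present. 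For part $(iii)$ it then remains to pass to the limit in the core radius: since $\rho(R)=\eta(R)R$ with $\eta$ continuous on $[R_c,+\infty)$ by Lemma \ref{lem2.3}$(iii)$, the map $R\mapsto\rho(R)$ is continuous, so $R(t)\to R_s$ gives $\rho(t)=\rho(R(t))\to\rho(R_s)=\rho_s$.

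I do not expect a genuine obstacle here, since all the delicate analysis---the well-posedness reduction and, above all, the global monotonicity of $F$ welded across the two formulas \eqref{2.10} and \eqref{HR}---is already in hand; the only points needing care are the gluing of the two sub-cases of part $(i)$ at $\bar\sigma=\sigma_Q$ and the verification that no spurious positive equilibrium survives when $g(\bar\sigma)\le0$. Should a convergence rate be desired, I would linearize \eqref{3.3} at $R_s$ to obtain the exponent $R_s\,F'(R_s)<0$ (using $F(R_s)=0$ and $F'(R_s)<0$), which gives exponential convergence to $R_s$ in parts $(ii)$ and $(iii)$.
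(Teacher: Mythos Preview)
Your proposal is correct and follows essentially the same route as the paper: reduce to the scalar ODE \eqref{3.3}, invoke the global strict decrease of $F$ from \eqref{2.13} and \eqref{2.19} together with its endpoint limits \eqref{2.12} and \eqref{2.19}, and conclude by phase-line analysis and Theorem~\ref{thm2.5}. The only minor difference is in part~$(i)$: the paper dispatches $\sigma_0<\bar\sigma\le\tilde\sigma$ by appealing directly to the upper bound in \eqref{3.4}, whereas you argue via $F(R)<0$; your version is in fact slightly cleaner at the borderline $\bar\sigma=\tilde\sigma$, where \eqref{3.4} alone only yields $R(t)\le R_0$ and one still needs the strict monotonicity of $F$ (or the absence of positive equilibria from Theorem~\ref{thm2.5}$(iii)$) to force $R(t)\to0$.
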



In this model, tumor has three growth state: $(1)$ proliferating one-layer state
with only proliferating cells; $(2)$ quiescent one-layer state with only quiescent 
cells;  $(3)$ proliferating-quiescent two-layer state with a quiescent core. 
As is shown in Lemma \ref{lem2.3}, if $\bar\sigma>\sigma_Q$, there exists a positive 
critical radius $R_c$, depending on $\bar\sigma$, such that the tumor stays in the proliferating state for
 $0<R\le R_c$,  and in the proliferating-quiescent two-layer state for 
 $R> R_c$. Obviously, tumor is always in the quiescent one-layer 
 state for $\bar\sigma\le\sigma_Q$. 
 
As we just said, if $\bar\sigma>\sigma_Q$, the tumor can be either in the proliferating one-layer state, or in the proliferating-quiescent two-layer state. These two states can be mutually transformed during the evolution process of tumor, in which we can observe the formation and vanishing of the quiescent core. More precisely, we have
 
\medskip

\begin{cor} For problem \eqref{1.1}, there exists a mutual transformation between the one- and two-layer state in the following sense: 

$(i)$ If $\sigma_Q<\bar\sigma<\sigma^*$,
  then for any initial radius $R_0>R_c$,  there exists a finite time $T>0$, 
  such that the tumor is in proliferating-quiescent two-layer state for 
  $0<t<T$, 
  and in proliferating one-layer state for $t\ge T$.
  
  $(ii)$ If $\bar\sigma>\sigma^*$, then for any initial radius $R_0<R_c$, 
  there exists a finite time $T>0$ such that the tumor is in  
  proliferating one-layer state for $0<t<T$, and
  in proliferating-quiescent two-layer state  for  $ t\ge T$.     
\end{cor}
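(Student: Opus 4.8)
The plan is to argue entirely from the scalar reduction \eqref{3.3}, namely $R'(t)=R(t)F(R(t))$ with $R(0)=R_0$, together with two structural facts already in hand: $F$ is continuous and strictly decreasing on $(0,+\infty)$ (by \eqref{2.13}, \eqref{2.19} and continuity at $R_c$), and $R_c=R_c(\bar\sigma)$ is exactly the threshold separating the one-layer regime $\{0<R\le R_c\}$ from the two-layer regime $\{R>R_c\}$. The whole proof then reduces to tracking the sign of $R'(t)$ across the single level $R_c$, and the decisive input is the value of $F$ at $R_c$ recorded in \eqref{2.16}: $F(R_c)=\mathcal F(\bar\sigma)<0$ when $\sigma_Q<\bar\sigma<\sigma^*$, and $F(R_c)=\mathcal F(\bar\sigma)>0$ when $\bar\sigma>\sigma^*$.

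For $(i)$, suppose $\sigma_Q<\bar\sigma<\sigma^*$ and $R_0>R_c$. By \eqref{2.16} we have $F(R_c)=\mathcal F(\bar\sigma)<0$; since $F$ is strictly decreasing with a unique (possibly zero) root $R_s$, this forces $R_s<R_c$ and $F(R)<0$ for every $R>R_s$. Because the trajectory starts at $R_0>R_c>R_s$ and cannot cross the equilibrium $R_s$, it stays above $R_s$, so $R'(t)=R(t)F(R(t))<0$ for all $t$ and $R(t)$ is strictly decreasing; by Theorem \ref{thm3.2} it converges to $R_s<R_c$. Continuity and strict monotonicity then yield a unique finite $T>0$ with $R(T)=R_c$, together with $R(t)>R_c$ on $(0,T)$ and $R(t)<R_c$ on $(T,+\infty)$. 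Through the regime characterization this is precisely the two-layer state for $0<t<T$ and the one-layer state for $t\ge T$.

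Part $(ii)$ is the mirror image. For $\bar\sigma>\sigma^*$ and $R_0<R_c$, \eqref{2.16} gives $F(R_c)>0$, hence $R_s>R_c$ (Theorem \ref{thm2.5}$(i)$) and $F(R)>0$ for all $R<R_s$. The trajectory starts at $R_0<R_c<R_s$, stays below the equilibrium $R_s$, so $R'(t)>0$ for all $t$ and $R(t)$ strictly increases to $R_s>R_c$. As before, there is a unique finite $T>0$ with $R(T)=R_c$, and $R(t)<R_c$ on $(0,T)$, $R(t)>R_c$ on $(T,+\infty)$; this is the one-layer state for $0<t<T$ and the two-layer state for $t>T$.

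I do not expect a genuine obstacle here: once the scalar equation \eqref{3.3}, the strict monotonicity of $F$, and the sign data \eqref{2.16} are available, the corollary is a one-dimensional phase-line analysis, with the convergence $R(t)\to R_s$ of Theorem \ref{thm3.2} supplying the finite crossing time. The only point needing a little care is to confirm that $R(t)$ actually crosses $R_c$ rather than approaching it asymptotically; this is guaranteed because $R_s$ lies strictly on the far side of $R_c$, which is itself a consequence of the strict sign of $F(R_c)$ in \eqref{2.16}. The single instant $t=T$ where $R=R_c$ falls, under the convention that $0<R\le R_c$ denotes the one-layer regime, into the one-layer phase; this agrees with $(i)$ verbatim and departs from the wording of $(ii)$ only at that isolated point.
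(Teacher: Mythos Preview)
Your proposal is correct and follows essentially the same approach as the paper: reduce to the scalar ODE \eqref{3.3}, use the strict monotonicity of $F$ together with the sign of $F(R_c)$ from \eqref{2.16} to determine the direction of monotone evolution of $R(t)$, and invoke the convergence $R(t)\to R_s$ with $R_s$ strictly on the far side of $R_c$ to obtain a finite crossing time. The paper's proof is terser (it states $F(R_0)<0$ directly and cites \eqref{2.13}, \eqref{2.19}, Theorem \ref{thm2.5} for the monotone convergence, then treats $(ii)$ as symmetric), but the logical structure is identical; your additional remarks on the phase-line picture and the endpoint $t=T$ are sound refinements rather than a different route.
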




\begin{proof}
  Suppose $\sigma_Q<\bar\sigma<\sigma^*$
  and  $R_0>R_c$. By Lemma \ref{lem2.3} we see at time $t=0$, the tumor
  has a quiescent core with radius $\rho_0=\rho(R_0)$ and 
  $F(R_0)<0$. By (\ref{2.13}), (\ref{2.19}) and Theorem \ref{thm2.5},  $R(t)$
  is strictly decreasing and converges to $R_s\in [0,R_c)$ as $t$ goes to infinity. 
  Hence there
  exists a time $T>0$ such that $R(T)=R_c$ with the properties that $R(t)>R_c$ 
  for $t<T$ and $R(t)<R_c$ for $t>T$.  The assertion $(i)$ follows.  
  The assertion $(ii)$ can be proved in a similar way. 
\end{proof}

\bigskip

\section{Connections with the necrotic core}\label{4}
 \setcounter{equation}{0}
\hskip 1em

In this section, we show the connections between the tumor model
with a quiescent core and the tumor model with a necrotic core.

For simplicity of notation, we take $\sigma_Q=\hat\sigma$, where $\hat\sigma$ is regarded as a threshold value of nutrient concentration for cell necrosis, cf. \cite{cui-06}, and with this, the necrotic region can be expressed by 
$\{\sigma\le \hat\sigma\}$.
 In the case $h(\sigma) \equiv 0$, problem (\ref{1.1}) becomes the 
 following necrotic tumor model:
\begin{equation}\label{4.1}
\left\{
\begin{array}{l}
  \Delta_r \sigma = f(\sigma) H(\sigma-\hat\sigma)\qquad\mbox{for}\;\; 0<r<R(t),\;\;t>0,
\\ [0.3 cm]
  \sigma_r(0,t)=0,\quad \sigma(R(t),t)=\bar\sigma \qquad \mbox{for}\;\;t>0,
\\[0.3 cm]
  \displaystyle
  R^2(t){dR(t)\over dt}=\int_{\sigma(r,t)>\hat\sigma} g(\sigma(r,t))r^2dr
  -\int_{\sigma(r,t)\le\hat\sigma}\nu r^2dr \qquad\mbox{for}\;\;t>0,
\\ [0.3 cm]
  R(0)=R_0.
\end{array}
\right.
\end{equation}
The above model (\ref{4.1}) has been well studied in  \cite{bue-erc-08, cui-06, cui-fri-01, wu-wang-19}. 
By comparing Lemma \ref{lem2.4}, Theorem \ref{thm2.5} with \cite[Theorem 2.3]{wu-wang-19}, we find that, both tumor models share the same constants $\sigma^*\in (\hat\sigma,+\infty)$ and $R_c$ (depending on $\bar\sigma$) because $f$ and $g$ keep the same assumptions in the models. As is illustrated in \cite{wu-wang-19}, 
for $\bar\sigma>\sigma^*$, problem (\ref{4.1}) has a unique stationary
necrotic solution $(\sigma_{nec}(r), \rho_{nec}, R_{nec})$ with $R_{nec}
>\rho_{nec}>0$. 

In the rest of this paper, we always assume $\bar\sigma>\hat\sigma$.
Define 
\begin{equation}
  \|h\|:=\max_{\sigma_0\le \sigma \le \hat\sigma} {|h(\sigma)|}\qquad
  \mbox{for}\;\; h\in C[\sigma_0,\hat\sigma].
\end{equation}
  Under the condition $(\rm A1)$, we have $\|h\|= h(\hat\sigma)$.
  We rewrite the corresponding stationary radius $R_s$ of problem \eqref{1.1} with $h$ by 
  $R_s=R_{s,h}$. Clearly, problem (\ref{1.1}) formally converges 
  to problem (\ref{4.1}) if $\|h\|$ vanishes.  It is an interesting question whether 
  $R_{s, h}$ will converge to $R_{nec}$. In this section, we shall 
  give an affirmative answer.

One might have noticed from Lemma \ref{lem2.2} that, the value $\sigma'(\rho)$ plays a role in solving nutrient concentration $\sigma$ in the proliferating region, 
and it actually depends on 
 the functions $h$ and $\rho$. To clarify the dependence of
$\sigma$ on $\rho$,  we  consider the following auxiliary problem:
\begin{equation}\label{4.3}
\left\{
\begin{array}{l}
 \displaystyle\sigma''(r)+{2\over r}\sigma'(r) = f(\sigma(r))
  \qquad\mbox{for}\;\; \rho<r<R,
\\ [0.3 cm]
  \sigma'(\rho)=K\rho,
  \;\; \sigma(\rho)=\hat\sigma, 
\\[0.3 cm]
  \sigma(R)=\bar\sigma,
\end{array}
\right.
\end{equation}
  where $f(\sigma)$ satisfies $(\rm A1)$, $K\ge0$ and 
  $\bar\sigma>\hat\sigma>\sigma_0$. 
  
\bigskip

\begin{lem}\label{lem4.1}

  Suppose $(\rm A1)$ and
   $\bar\sigma>\hat\sigma>\sigma_0$ hold.
  For any given  $\rho\ge 0$ and $\displaystyle 0\le K< {1\over 3}f(\hat\sigma)$,  
   problem $(\ref{4.3})$ has a unique solution 
  $(\sigma(r,\rho, K), R(\rho, K))$,  and there hold the following assertions: 


 $(i)$ $\sigma(r,\rho,K)$ is strictly increasing in $r$ and strictly convex in $r$.
 
 $(ii)$ $\sigma(r, \rho, K)$ is strictly increasing in $K$,
 and $R(\rho,K)$ is strictly decreasing in $K$.

$(iii)$ $\sigma(r, \rho, K)$ is strictly decreasing in $\rho$, 
and $R(\rho,K)$ is strictly increasing in $\rho$.
\end{lem}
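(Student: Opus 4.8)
The plan is to treat \eqref{4.3} exactly as the shooting problem \eqref{2.5} was treated in Lemma \ref{lem2.2}, since the prescribed slope $\sigma'(\rho)=K\rho$ plays precisely the role of $\Phi(\rho)$ there: both are nonnegative, and $K\rho$ is bounded above by $\tfrac13 f(\hat\sigma)\rho$ by the hypothesis $0\le K<\tfrac13 f(\hat\sigma)$, in the same way $\Phi(\rho)$ was controlled in \eqref{2.3-}. Concretely, I would first rewrite \eqref{4.3} as a Cauchy problem with data $\sigma(\rho)=\hat\sigma$, $\sigma'(\rho)=K\rho$, equivalent to the integral equation
\begin{equation*}
\sigma(r)=\hat\sigma+\rho^3 K\Big(\frac1\rho-\frac1r\Big)+\int_\rho^r\frac{1}{\alpha^2}\int_\rho^\alpha s^2 f(\sigma(s))\,ds\,d\alpha,
\end{equation*}
and obtain a unique local solution by the Banach fixed point theorem on a small ball in $C([\rho,\rho+h])$, word for word as in Lemma \ref{lem2.2}. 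Extending to the maximal interval $[\rho,R_\infty)$ and using $\sigma\ge\hat\sigma>\sigma_0$ (so $f(\sigma)>f(\sigma_0)=0$ by (A1)) gives $\sigma'(r)=r^{-2}\big[\rho^3K+\int_\rho^r s^2 f(\sigma)\,ds\big]>0$ for $r>\rho$ and $\sigma\to+\infty$ as $r\to R_\infty$, whence a unique $R=R(\rho,K)\in(\rho,R_\infty)$ with $\sigma(R)=\bar\sigma$; this yields existence, uniqueness and the monotonicity in (i). For the convexity in (i) I would repeat the Lemma \ref{lem2.2} estimate, which now reads
\begin{equation*}
\sigma''(r)-\frac{\sigma'(r)}{r}\ge\frac{\rho^3}{r^3}\big(f(\sigma(r))-3K\big)\ge\frac{\rho^3}{r^3}\big(f(\hat\sigma)-3K\big)>0,
\end{equation*}
where the last inequality is exactly where the hypothesis $K<\tfrac13 f(\hat\sigma)$ enters.

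For the monotonicity in $K$ in (ii) I would differentiate the Cauchy problem in $K$. Writing $z=\partial_K\sigma$, it solves $z''+\tfrac2r z'=f'(\sigma)z$ with $z(\rho)=0$ and $z'(\rho)=\rho>0$. Since $(r^2z')'=r^2 f'(\sigma)z$ and $f'\ge0$, a maximum-principle argument (if $z$ first vanished again at some $r_0>\rho$, then $r^2z'$ would be nondecreasing on $(\rho,r_0)$, forcing $z'(r_0)>0$, a contradiction) yields $z(r)>0$ for all $r>\rho$; hence $\sigma$ is strictly increasing in $K$. Differentiating the identity $\sigma(R(\rho,K),\rho,K)=\bar\sigma$ then gives $\sigma_r(R)\,R_K+z(R)=0$, and since $\sigma_r(R)>0$ and $z(R)>0$ we obtain $R_K<0$, i.e.\ $R(\rho,K)$ is strictly decreasing in $K$.

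The main obstacle is assertion (iii), because here the left endpoint moves with $\rho$, so the linearized variable $w=\partial_\rho\sigma$ inherits its data from both the moving point and the $\rho$-dependence of the slope $K\rho$. Differentiating the identities $\sigma(\rho,\rho,K)=\hat\sigma$ and $\sigma_r(\rho,\rho,K)=K\rho$ totally in $\rho$, and using $\sigma''(\rho)=f(\hat\sigma)-2K$ from the equation, I would obtain
\begin{equation*}
w(\rho)=-K\rho\le0,\qquad w'(\rho)=3K-f(\hat\sigma)<0,
\end{equation*}
the strict negativity of $w'(\rho)$ being again precisely the content of $K<\tfrac13 f(\hat\sigma)$. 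Since $w$ solves the same linearized equation, the maximum principle (if $w$ first returned to $0$ at $r_1>\rho$, then $r^2w'$ would be nonincreasing on $(\rho,r_1)$, forcing $w'(r_1)<0$) gives $w(r)<0$ for $r>\rho$, so $\sigma$ is strictly decreasing in $\rho$; differentiating $\sigma(R,\rho,K)=\bar\sigma$ then yields $R_\rho=-w(R)/\sigma_r(R)>0$.

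Alternatively, and to cover $\rho=0$ cleanly, I would argue by direct comparison, mirroring the iteration in Lemma \ref{lem2.2}(iii). For $0\le\rho_1<\rho_2$ one has $\sigma(\rho_2,\rho_1,K)>\hat\sigma=\sigma(\rho_2,\rho_2,K)$, while the bound $K<\tfrac13 f(\hat\sigma)$ forces
\begin{equation*}
\rho_2^2\sigma_r(\rho_2,\rho_1,K)\ge\rho_1^3K+f(\hat\sigma)\frac{\rho_2^3-\rho_1^3}{3}>\rho_2^3K=\rho_2^2\sigma_r(\rho_2,\rho_2,K),
\end{equation*}
so at $r=\rho_2$ the solution with the smaller $\rho$ dominates in both value and slope; propagating this forward by the comparison argument of Lemma \ref{lem2.2}(iii) gives $\sigma(r,\rho_1,K)>\sigma(r,\rho_2,K)$ for $r\ge\rho_2$, and hence, via $\sigma(R)=\bar\sigma$, that $R$ is strictly increasing in $\rho$. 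The delicate point throughout is the sign of the slope comparison at the interface, which is governed entirely by the hypothesis $K<\tfrac13 f(\hat\sigma)$.
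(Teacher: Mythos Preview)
Your proposal is correct and follows essentially the same route as the paper: reduce to the shooting problem of Lemma~\ref{lem2.2} with $K\rho$ in place of $\Phi(\rho)$, use the integral formula for $\sigma'$ to get monotonicity and convexity in $r$, and for (ii)--(iii) linearize in $K$ and $\rho$ to obtain the problems for $z=\partial_K\sigma$ and $w=\partial_\rho\sigma$ with exactly the initial data $z(\rho)=0$, $z'(\rho)=\rho$ and $w(\rho)=-K\rho$, $w'(\rho)=3K-f(\hat\sigma)$, then conclude by the maximum principle and implicit differentiation of $\sigma(R,\rho,K)=\bar\sigma$. The only cosmetic difference is that for convexity the paper bounds $\sigma''$ directly via $\sigma''\ge\tfrac13 f(\hat\sigma)+\tfrac{2\rho^3}{r^3}\big(\tfrac13 f(\hat\sigma)-K\big)>0$, which remains strictly positive even when $\rho=0$, whereas your bound on $\sigma''-\sigma'/r$ degenerates to $\ge0$ at $\rho=0$ and needs the one extra remark that then $\sigma''\ge\sigma'/r>0$ for $r>0$.
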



\begin{proof}

Proceeding as in Section \ref{2} where we dealt with problem \eqref{2.5} for $\rho<r<R$, we can solve the first two lines of problem 
  $(\ref{4.3})$ to get the unique solution  $\sigma(r, \rho, K)$,
  then by solving the third equation $\sigma(R, \rho, K)=\bar\sigma$ to obtain the unique $R=R(\rho, K)\in (\rho, +\infty)$. This gives the 
unique solution 
  $(\sigma(r,\rho, K), R(\rho, K))$ for $\rho\le r\le R(\rho,K)$.

$(i)$ Similarly to Lemma \ref{lem2.2}, we see $\sigma(r,\rho,K)\ge \hat\sigma$ for $r\ge \rho$,
  and $\sigma(r,\rho,K)$ is strictly increasing in $r$. It is easy to see that
$$
  \sigma'(r)={1\over r^2} \Big(K\rho^3+ \int_\rho^r f(\sigma(s))s^2 ds\Big)>0.
$$
  From the monotonicity of $f$ and $\sigma(r)\ge \hat\sigma$ for $r\le\rho$,
  we have
$$
  \sigma''(r)=f(\sigma(r))-{2\over r}\sigma'(r)
  \ge {1\over 3} f(\hat\sigma)+{2 \rho^3\over r^3}
   \Big({1\over 3} f(\hat\sigma)-K\Big)>0\qquad
   \mbox{for}\;\; r\ge\rho,
$$   which means $\sigma(r,\rho,K)$ is strictly convex in $r$.
  
  $(ii)$ For any $r>\rho$, we see that $v(r):={\partial \sigma\over \partial K}$ satisfies 
$$
\left\{
\begin{array}{l}
 \displaystyle  v''+{2\over s} v' =f'(\sigma) v \qquad \mbox{for}\;\; \rho<s<r,
  \\ [0.3 cm]
  v'(\rho)=\rho,\qquad v(\rho)=0.
\end{array}
\right.  
$$
  Since $f'(\sigma)> 0$, it follows from the strong maximum principle that
\begin{equation}
  \max\limits_{\rho\le s\le r} v(s) = v(r)>0,
\end{equation}
 i.e.,  ${\partial \sigma\over \partial K}>0$. Then from equation 
 $\sigma(R,\rho,K)=\bar\sigma$,  we get  $
  {\partial R\over\partial K}=-{\partial \sigma\over \partial K}/{\partial \sigma\over
  \partial R}<0.$

  $(iii)$ From $(4.3)$ we see that $w(r)={\partial \sigma\over \partial \rho}$ satisfies 

$$  \left\{
\begin{array}{l}
 w''(s)+{2\over s} w'(s)=f'(\sigma(s))w(s) \qquad \mbox{for}\;\; \rho<s<r,
  \\ [0.3 cm]
  w'(\rho)=K-\sigma''(\rho)=3K-f(\hat\sigma)<0, \\[0.3cm]
 w(\rho)=-\sigma'(\rho)  =-K\rho\le 0.  
\end{array}
\right.$$  
Again, it follows from the strong maximum principle that 
\begin{equation}
  \min_{\rho\le s\le r}w(s)=w(r)<0,\qquad\mbox{for}\;\;
  r>\rho\ge 0.
\end{equation}
  This implies ${\partial \sigma\over\partial \rho}<0$ and, in a similar manner, we obtain
  ${\partial R\over \partial \rho}>0$ for $\rho\ge 0$.    
\end{proof}
  

From Lemma \ref{lem4.1}, $\sigma(r,\rho,K)$ and $R(\rho,K)$ is continuous and we have
\begin{equation}\label{4.6}
  \lim_{K\to 0^+} \sigma(r,\rho,K)=\sigma(r,\rho, 0),\qquad
  \lim_{K\to 0^+} R(\rho,K)=R(\rho, 0),
\end{equation}
in which, due to the monotonicity,   
 $(\sigma(r,\rho,0), R(\rho,0))$ satisfies problem \eqref{4.3} for $\rho\ge 0$ and 
 $K=0$.
 
  By comparing problem \eqref{2.5} and problem \eqref{4.3},  from definition \eqref{Rc} we see
\begin{equation}\label{4.7}
  R_c=\lim_{\rho\to 0^+} R(\rho, K)\qquad\mbox{for all}\;\;
  0\le K<{1\over3}f(\hat\sigma).
\end{equation}
  Then from the equation $\sigma(R,\rho,K)=\bar\sigma$, we can get
  $\rho=\rho(R,K)$,
  and
\begin{equation}\label{4.8}
  {\partial \rho\over \partial R}>0,\qquad {\partial\rho\over \partial K}>0
  \qquad \mbox{for}\;\; R\ge R_c,\; 0\le K<{1\over 3}f(\hat\sigma).
\end{equation}
  Similarly, we have
\begin{equation}\label{4.9}
  \lim_{K \to 0^+}\rho(R,K)=\rho(R,0),\qquad
  \rho(R_c, K)=0.
\end{equation}
  Next, we let
\begin{equation}\label{4.10}
\begin{matrix}
\begin{aligned}
   F(R, K)&=\frac{1}{R^3}\Big(\int_{\rho(R, K)}^{R}g(\sigma(r,\rho(R,K),K))r^2dr
  -\frac{\nu}{3}\rho^3(R, K)\Big) \\
&=\int_{\eta(R, K)}^{1}g(\mathcal U(s,R, K))s^2ds-\frac{\nu}{3}(\eta(R,K))^3,
\end{aligned}
\end{matrix}
\end{equation}
where 
$$
\mathcal U(s,R, K)=\sigma(sR,\rho(R,K),K), \qquad \eta(R,K)=\rho(R, K)/R.
$$
  By (\ref{2.16}) and (\ref{4.9}),  for $\bar\sigma>\sigma^*$ and
  $\displaystyle 0\le K<{1\over 3}f(\hat\sigma)$,  
\begin{equation}\label{4.11}
  {\partial F(R_c, K)\over \partial K}=0\qquad
  \mbox{and}\qquad 
  F(R_c, K)>0. 
\end{equation}
Moreover, we have the following result.
\bigskip


\begin{lem}\label{lem4.2}

  Suppose $(\rm A1)$--$(\rm A3)$ and
   $\bar\sigma>\sigma^*$ hold.  For $R>R_c$ and 
   $\displaystyle 0\le K<{1\over 3}f(\hat\sigma)$, there hold
\begin{equation}\label{4.12}
  {\partial F(R, K)\over \partial K}<0\qquad
  \mbox{and}\qquad 
   {\partial F(R, K)\over \partial R}<0. 
\end{equation}
    
\end{lem}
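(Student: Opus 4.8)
The plan is to differentiate the rescaled representation (\ref{4.10}) of $F$ in each variable and to reduce both inequalities to sign information on the derivatives of the profile $\mathcal U$ and of $\eta$, in the spirit of Lemma \ref{lem2.3} and (\ref{2.13}). Writing $u(s)=\mathcal U(s,R,K)$, the scaling $s=r/R$ recasts (\ref{4.3}) as the shooting problem $u''+\tfrac{2}{s}u'=R^{2}f(u)$ on $(\eta,1)$ with $u(\eta)=\hat\sigma$, $u'(\eta)=KR^{2}\eta$ and $u(1)=\bar\sigma$, where $\eta=\eta(R,K)$. Because $\mathcal U(\eta,R,K)=\hat\sigma=\sigma_Q$, differentiating $F=\int_{\eta}^{1}g(\mathcal U)s^{2}\,ds-\tfrac{\nu}{3}\eta^{3}$ with the Leibniz rule makes the two boundary contributions at $s=\eta$ merge into one, giving
\[
\frac{\partial F}{\partial R}=\int_{\eta}^{1}g'(\mathcal U)\,\frac{\partial \mathcal U}{\partial R}\,s^{2}\,ds-(g(\sigma_Q)+\nu)\,\eta^{2}\,\frac{\partial \eta}{\partial R},\qquad
\frac{\partial F}{\partial K}=\int_{\eta}^{1}g'(\mathcal U)\,\frac{\partial \mathcal U}{\partial K}\,s^{2}\,ds-(g(\sigma_Q)+\nu)\,\eta^{2}\,\frac{\partial \eta}{\partial K}.
\]
Since $g'>0$ by $(\rm A2)$ and $g(\sigma_Q)+\nu\ge 0$ by $(\rm A3)$, each inequality in (\ref{4.12}) will follow once the corresponding profile derivative is shown to be negative and the corresponding $\eta$-derivative nonnegative.

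For $\partial F/\partial R$ I would reproduce the computation behind (\ref{2.9}). Differentiating the rescaled problem in $R$ with $K$ held fixed yields an \emph{inhomogeneous} linear equation for $\partial \mathcal U/\partial R$, namely $(\partial_R\mathcal U)''+\tfrac{2}{s}(\partial_R\mathcal U)'=R^{2}f'(u)\,\partial_R\mathcal U+2Rf(u)$ on $(\eta,1)$ with $\partial_R\mathcal U(1)=0$ and a matching condition at $s=\eta$. As $f'>0$ and the forcing term $2Rf(u)>0$, the strong maximum principle together with the sign of the endpoint condition gives $\partial \mathcal U/\partial R<0$ on $(\eta,1)$ and $\partial \eta/\partial R>0$, exactly as in Lemma \ref{lem2.3}. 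Feeding these into the first identity and invoking $(\rm A2)$--$(\rm A3)$ yields $\partial F/\partial R<0$, and this strictness is robust thanks to the positive source $2Rf(u)$.

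The delicate inequality is $\partial F/\partial K<0$, which I expect to be the main obstacle. Differentiating the rescaled problem in $K$ with $R$ frozen now produces a \emph{homogeneous} equation $z''+\tfrac{2}{s}z'=R^{2}f'(u)z$ for $z:=\partial \mathcal U/\partial K$ on $(\eta,1)$, with $z(1)=0$ and, from $u(\eta)=\hat\sigma$, the boundary value $z(\eta)=-u'(\eta)\,\partial_K\eta=-KR^{2}\eta\,\partial_K\eta$. Here I would take $\partial \eta/\partial K=\tfrac1R\,\partial \rho/\partial K>0$ directly from (\ref{4.8}), so that $z(\eta)\le 0$; since the zeroth-order coefficient $R^{2}f'(u)$ is nonnegative, a positive interior maximum of $z$ is impossible, and the maximum principle forces $z\le 0$ on $[\eta,1]$. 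For $K>0$ one has $z(\eta)<0$ strictly, and the strong maximum principle upgrades this to $z<0$ on $(\eta,1)$; then $\int_{\eta}^{1}g'(\mathcal U)z\,s^{2}\,ds<0$ and, together with $(g(\sigma_Q)+\nu)\eta^{2}\,\partial_K\eta\ge 0$, the second identity gives $\partial F/\partial K<0$. The genuine difficulty, and the point I would treat most carefully, is that the $z$-problem carries \emph{no} forcing term, unlike the $\partial_R$ case, so strict negativity is not automatic: at $K=0$ both boundary values of $z$ vanish and hence $z\equiv 0$, so there the strict sign of $\partial F/\partial K$ must be extracted from the boundary term $(g(\sigma_Q)+\nu)\eta^{2}\,\partial_K\eta$, i.e. from $g(\sigma_Q)+\nu>0$, or else deduced by continuity from the range $K>0$. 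Accordingly I would isolate $K>0$ as the substantive case and dispose of $K=0$ separately.
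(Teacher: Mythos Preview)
Your plan matches the paper's proof: both differentiate the rescaled $F$ from (\ref{4.10}), set up linearized boundary problems for $\partial_K\mathcal U$ and $\partial_R\mathcal U$, and conclude via the strong maximum principle together with $(\rm A2)$--$(\rm A3)$. The only procedural difference is that you import $\partial_K\eta>0$ from (\ref{4.8}), whereas the paper also records the Neumann datum $v'(\eta)=R^{2}\eta+R^{2}(3K-f(\hat\sigma))\zeta$ and runs the contradiction argument of (\ref{2.9}) to obtain $\zeta>0$ and $v<0$ simultaneously; both routes are valid.

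Your caution at $K=0$ is well placed and in fact applies equally to the paper's argument. At $K=0$ one has $v(\eta)=v(1)=0$, and the extra datum $v'(\eta)=R^{2}(\eta-f(\hat\sigma)\zeta)$ together with $v(1)=0$ forces $\zeta=\eta/f(\hat\sigma)$ and hence $v\equiv 0$ (the homogeneous equation with $v(\eta)=0$, $v'(\eta)=0$ has only the trivial solution). So the integral term in $\partial_K F$ vanishes and one is left with $\partial_K F(R,0)=-(g(\sigma_Q)+\nu)\,\eta^{3}/f(\hat\sigma)$, which is strictly negative only under $g(\sigma_Q)+\nu>0$; $(\rm A3)$ alone gives merely $\le 0$. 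Your ``continuity from $K>0$'' alternative cannot recover strictness at $K=0$. This edge case does not affect the downstream uses (\ref{4.20})--(\ref{4.22}) and Theorem~\ref{thm4.3}, where only $K>0$ enters, so treating $K>0$ as the substantive case, as you propose, is the right move.
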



\begin{proof}
Since $\sigma(r,\rho,K)$ satisfies problem \eqref{4.3} for 
 $\rho=\rho(R,K)$ and $R\ge R_c$,  
 $\mathcal U(s,R, K)$ is the solution of the following problem 
\begin{equation}\label{4.13}
\left\{
\begin{array}{l}
 \mathcal U_{ss}(s,R, K)+\displaystyle\frac{2}{s}\mathcal U_s(s,R, K)=R^2f(\mathcal U(s,R,K)) 
 \qquad\mbox{for}\;\; \eta<s<1,
\\ [0.3 cm]
  \mathcal U(1,R, K)=\bar\sigma, \;\; 
  \mathcal U(\eta,R,K)=\hat\sigma, \;\;  
  \mathcal U_s(\eta,R, K)=KR^2\eta,
\end{array}
\right.
\end{equation}
  where $\eta=\eta(R,K)$. Let
$$
  v(s)=\frac{\partial\mathcal U}{\partial K}(s,R, K), \qquad \zeta=\frac{\partial\eta}{\partial K}(R,K),
$$
then $v(s)$ satisfies
\begin{equation}\label{4.14}
\left\{
\begin{array}{l}
 v''(s)+\displaystyle\frac{2}{s}v'(s)
 =R^2f'(\mathcal U)v \qquad\mbox{for}\;\; \eta<s<1,
\\ [0.3 cm]
  v(1)=0, \quad  v(\eta)=-KR^2\eta\zeta,
\\ [0.3 cm]
  v'(\eta)=R^2\eta+R^2\zeta(3K-f(\hat\sigma)).
\end{array}
\right.
\end{equation}
  Since $3K-f(\hat\sigma)<0$, similarly to \eqref{2.9}, by the strong maximum 
  principle we obtain
\begin{equation}\label{4.15}
  v(s)={\partial\mathcal U\over \partial K}< 0\quad \mbox{for}\;\; \eta<s<1,
  \qquad\mbox{and}\qquad
  \zeta={\partial \eta\over \partial K}>0.
\end{equation}
  Hence we have
\begin{equation}\label{4.16}
{\partial F\over \partial K}=\int_{\eta(R,K)}^{1}
g'(\mathcal U(s,R,K))\displaystyle
  \frac{\partial\mathcal U}{\partial K}
  (s,R,K)s^2ds-(g(\hat\sigma)+\nu)
  \eta^2(R,K) {\partial\eta\over \partial K}(R,K)<0.
\end{equation}
  Similarly, we can obtain
\begin{equation}\label{4.17}
{\partial\mathcal U\over \partial R}(s,R,K)< 0\quad \mbox{for}\;\; \eta<s<1
  \qquad\mbox{and}\qquad
  {\partial \eta\over \partial R}>0.
\end{equation}
  It follows that
\begin{equation}\label{4.18}
{\partial F\over \partial R}=\int_{\eta(R,K)}^{1}g'(\mathcal U(s,R,K))
\displaystyle
  \frac{\partial\mathcal U}{\partial R}(s,R,K)s^2ds-(g(\hat\sigma)+\nu)
  \eta^2(R,K) {\partial\eta\over \partial R}(R,K)<0.
\end{equation}
This completes the proof. \end{proof} 


Similarly to Lemma \ref{lem2.3} $(iii)$  and (\ref{2.12}), we can easily verify that
for any $\displaystyle 0\le K<{1\over 3}f(\hat\sigma)$,  
\begin{equation}\label{4.19}
  \lim_{R\to +\infty} \eta(R,K)=1\qquad \mbox{and}\qquad
  \lim_{R\to +\infty} F(R,K)=-{\nu\over 3}<0.
\end{equation}
  By (\ref{4.11}) and Lemma \ref{lem4.2},  we see the equation $F(R,K)=0$ has
  a unique root $R=R_s(K)>R_c$ for any 
  $\displaystyle 0\le K<{1\over 3}f(\hat\sigma)$,
  and
\begin{equation}\label{4.20}
  {d R_s\over d K}<0. 
\end{equation}
  Thus by (\ref{4.6}) we immediately get 
\begin{equation}\label{4.21}
 \lim_{K\to 0^+} F(R,K)=F(R,0)\qquad \mbox{and}\qquad
  \lim_{K\to 0^+} R_s(K)=R_s(0).  
\end{equation}
  Letting $\rho_s(K)=\rho(R_s(K), K)$ and $\sigma_s(K)=\sigma(r,\rho_s(K), K)$,
  we see $(\sigma_s(0), \rho_s(0), R_s(0))$ is the stationary solution of
  problem (\ref{4.1}),  i.e.,
$$
(\sigma_s(0), \rho_s(0),R_s(0))=(\sigma_{nec}(r), \rho_{nec}, R_{nec}).
$$
  Hence we obtain
\begin{equation}\label{4.22}
  \lim_{K\to 0^+}\sigma(r, \rho_s(K), K)=\sigma_{nec}(r),\quad
  \lim_{K\to 0^+}\rho_s(K)=\rho_{nec},\quad
  \lim_{K\to 0^+}R_s(K)=R_{nec}.
\end{equation}

 \medskip
 
  Let $(\rm A1)$--$(\rm A3)$ hold and $f, g$ be fixed. Denote the corresponding stationary solution of problem \eqref{1.1} with $h$ by
 $(\sigma_{s,h}(r), \rho_{s,h}, R_{s,h})$. 
 With the above preparations, we present the following 
 connection between the tumor models with the quiescent core 
 and with the necrotic core. 
 
\bigskip


\begin{thm}\label{thm4.3}
  Suppose $(\rm A1)$--$(\rm A3)$ and 
  $\bar\sigma>\sigma^*$ hold. Then the unique stationary solution of 
  proliferating-quiescent tumor model \eqref{1.1}
   converges to the stationary solution of the necrotic tumor model \eqref{4.1}, 
   as $\|h\|$ goes to $0$. More precisely, we have
$$
\lim_{\|h\| \to 0^+}\sigma_{s,h}(r)=\sigma_{nec}(r),
\quad  \lim_{\|h\| \to 0^+}\rho_{s,h}=\rho_{nec},
\quad \lim_{\|h\| \to 0^+}R_{s,h}=R_{nec}.
$$

\end{thm}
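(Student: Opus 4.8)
The plan is to reduce everything to the scalar convergence $R_{s,h}\to R_{nec}$, and to obtain that convergence by sandwiching the true stationary radius between two radii $R_s(K)$ produced by the auxiliary problem \eqref{4.3}, both of which tend to $R_{nec}=R_s(0)$ as $\|h\|\to0$. The conceptual bridge is that the stationary solution of \eqref{1.1} in the proliferating shell is itself an instance of \eqref{4.3}: at the quiescent interface it satisfies $\sigma(\rho)=\hat\sigma$ and $\sigma'(\rho)=\Phi(\rho)$, so if for a fixed $R\ge R_c$ I write $\rho_h(R)$ for the true quiescent core radius and set the effective parameter $K_h(R):=\Phi(\rho_h(R))/\rho_h(R)$, then $\sigma'(\rho_h(R))=K_h(R)\rho_h(R)$. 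By the uniqueness in Lemma \ref{lem4.1}, the true proliferating profile then coincides with $\sigma(\cdot,\rho(R,K_h(R)),K_h(R))$ and $\rho_h(R)=\rho(R,K_h(R))$; consequently the true selection function $F$ from \eqref{2.10} equals the auxiliary one, $F(R)=F(R,K_h(R))$, for every $R>R_c$.

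First I would record the uniform control of the effective parameter. Lemma \ref{lem2.1}$(iii)$ gives $0<\Phi(\rho)<\tfrac13 h(\sigma_Q)\rho=\tfrac13\|h\|\rho$, hence $0<K_h(R)<\tfrac13\|h\|$ for all $R>R_c$. Since $\hat\sigma=\sigma_Q>\sigma_0$ forces $f(\hat\sigma)>0$, for all sufficiently small $\|h\|$ one has $\tfrac13\|h\|<\tfrac13 f(\hat\sigma)$, so $F(R,\tfrac13\|h\|)$ is defined. Combining the identity above with the strict monotonicity $\partial F/\partial K<0$ of Lemma \ref{lem4.2} yields
\begin{equation*}
F\big(R,\tfrac13\|h\|\big)<F(R)=F\big(R,K_h(R)\big)<F(R,0)\qquad\text{for all }R>R_c.
\end{equation*}
Evaluating at $R=R_{s,h}$, where $F(R_{s,h})=0$, and using that each $F(\cdot,K)$ is strictly decreasing in $R$ with root $R_s(K)$, I get $F(R_{s,h},0)>0$ and $F(R_{s,h},\tfrac13\|h\|)<0$, that is,
\begin{equation*}
R_s\big(\tfrac13\|h\|\big)<R_{s,h}<R_s(0)=R_{nec}.
\end{equation*}
Letting $\|h\|\to0^+$ and invoking $\lim_{K\to0^+}R_s(K)=R_s(0)$ from \eqref{4.21}, the squeeze theorem gives $R_{s,h}\to R_{nec}$.

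With the radius in hand, the interface and profile follow by monotonicity and continuity. Writing $\rho_{s,h}=\rho(R_{s,h},K_h(R_{s,h}))$ with $K_h(R_{s,h})\in(0,\tfrac13\|h\|)$ and using $\partial\rho/\partial R>0$, $\partial\rho/\partial K>0$ from \eqref{4.8} together with $R_{s,h}<R_{nec}$, I can bracket
\begin{equation*}
\rho(R_{s,h},0)\le\rho_{s,h}\le\rho\big(R_{nec},\tfrac13\|h\|\big);
\end{equation*}
the left side tends to $\rho(R_{nec},0)=\rho_{nec}$ since $R_{s,h}\to R_{nec}$, and the right side tends to $\rho(R_{nec},0)=\rho_{nec}$ by \eqref{4.9}, so $\rho_{s,h}\to\rho_{nec}$. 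For the concentration, on the proliferating shell $\sigma_{s,h}(r)=\sigma(r,\rho_{s,h},K_h(R_{s,h}))$ converges to $\sigma(r,\rho_{nec},0)=\sigma_{nec}(r)$ by the continuity of the auxiliary solution in $(\rho,K)$ (again bracketing via the monotonicities of Lemma \ref{lem4.1}), while on the shrinking core $\{r<\rho_{s,h}\}$ the relation $V_{rr}+\tfrac2r V_r=h(V)$ with $\sigma_0<V\le\hat\sigma$ forces $V(\cdot,\rho_{s,h})\to\hat\sigma\equiv\sigma_{nec}$ as $\|h\|\to0$.

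I expect the main obstacle to be the bridging identity $F(R)=F(R,K_h(R))$ with $K_h(R)$ uniformly confined to $(0,\tfrac13\|h\|)$: this is exactly where the auxiliary problem \eqref{4.3} pays off, since it isolates the entire influence of $h$ into the single scalar $K=\Phi(\rho)/\rho$, and the estimate $\Phi(\rho)<\tfrac13\|h\|\rho$ from Lemma \ref{lem2.1}$(iii)$ is what drives $K_h\to0$. The only other point needing mild care is the uniformity of the continuity statements in $R$ near $R_{nec}$ when both arguments of $\rho$ and $\sigma$ vary simultaneously, which I would handle by the monotone bracketing indicated above rather than by an abstract joint-continuity argument.
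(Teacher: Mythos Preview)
Your proposal is correct and follows essentially the same route as the paper. Both arguments hinge on recognizing that the stationary problem for \eqref{1.1} in the proliferating shell is an instance of the auxiliary problem \eqref{4.3} with $K=\Phi(\rho)/\rho\in(0,\tfrac13\|h\|)$, then using the monotonicity of $R_s(K)$ in $K$ (equivalently, of $F(R,K)$ in $K$) to sandwich $R_{s,h}$ between $R_s(\tfrac13\|h\|)$ and $R_s(0)=R_{nec}$; your write-up is more explicit than the paper's in stating the bridging identity $F(R)=F(R,K_h(R))$ and in carrying out the subsequent convergence of $\rho_{s,h}$ and $\sigma_{s,h}$ via monotone bracketing, whereas the paper compresses these into a couple of lines.
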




\begin{proof}

Recall $\hat\sigma=\sigma_Q$. For any $h$ satisfying $(\rm A1)$, we have
$$
\sigma'(\rho)=\Phi(\rho)={1\over \rho^2}\int_0^{\rho} s^2h(V(s,\rho))ds\le\frac{1}{3}
  \rho h(\hat\sigma)={1\over 3} \rho\|h\|.
$$
  From  (\ref{4.20}), we see that for any $\displaystyle {1\over 3}h(\hat\sigma)< 
  K< {1\over 3}f(\hat\sigma)$, 
$$
  R_{nec}=R_s(0)>R_{s,h}=R_s({\Phi(\rho)\over \rho})>R_s(K).
$$
  It follows that 
$$
  \lim_{\|h\|\to 0^+} R_{s,h}=R_{nec}.
$$
  Then similarly to (\ref{4.22}), we readily get
$$
\lim_{\|h\| \to 0^+}\sigma_{s,h}(r)=\sigma_{nec}(r)
\qquad \mbox{and}\qquad  \lim_{\|h\| \to 0^+}\rho_{s,h}=\rho_{nec}.
$$
This completes the proof. \end{proof} 

With the aid of Lemmas \ref{lem4.1}--\ref{lem4.2} and the monotonicity of $F$, 
only slight modifications in the above deductions are required to prove a more general result that not only 
the stationary solution but also the transient  
solution of problem \eqref{1.1} will converge to 
the corresponding solution of problem \eqref{4.1} as $\|h\|$ goes to $0$.
This indicates that the tumor model with a quiescent core not only converges to the tumor model with a necrotic core in the structure, but they also  possess a vital link in their solution properties. We omit the details here.
  
\bigskip


\begin{rem}
In the case where 
$f$, $h$, $g$ are all linear functions with the form
\begin{equation}\label{fhg}
f(\sigma)=\sigma,\qquad h(\sigma)=\lambda\sigma, \qquad  g(\sigma)=\mu(\sigma-\tilde\sigma),
\end{equation}
 problem (\ref{1.1}) has been well studied by Cui \cite{cui-06} for $\lambda=0$, and by 
 Liu and Zhuang \cite{liu-zhuang} for 
 $0<\lambda\le 1$.
 In this case, $\|h\|$ goes to $0$ if and only if $\lambda$ goes to $0$.
 We redenote the steady tumor radius $R_s$ by $R_{s,\lambda}$ for 
 $0\le \lambda\le 1$.
According to the results in \cite{cui-06} and \cite{liu-zhuang}, 
for $\bar\sigma>\sigma^*(>\tilde\sigma>\hat\sigma)$,
we can obtain the unique root $R_c>0$ of the following equation
$$
\frac{\sinh R}{R}=\frac{\bar\sigma}{\hat\sigma}.
$$
  The nutrient concentration in tumor region is given by
\begin{equation*}
  \sigma(r,R,\lambda)=\left\{
  \begin{array}{l}
  \displaystyle\frac{\rho\hat\sigma}{\sinh\sqrt{\lambda}\rho}\frac{\sinh\sqrt{\lambda}r}{r}\qquad\mbox{for}\;\;0<r<\rho,
  \\ [0.4 cm]
 \displaystyle\frac{\hat\sigma}{r}\big[\sqrt{\lambda}\rho  \coth\sqrt{\lambda}\rho  
 \sinh(r-\rho)+\rho \cosh(r-\rho)\big]\qquad\mbox{for}\;\; \rho<r<R,
 \end{array}
 \right.
\end{equation*}
where $\rho=\rho(R,\lambda)$ satisfies the equation
\begin{equation*}
L(\rho,R,\lambda):= \sqrt{\lambda}\rho \coth\sqrt{\lambda}\rho  \sinh(R-\rho)+\rho \cosh(R-\rho)-\frac{\bar\sigma}{\hat\sigma}R=0,
\end{equation*}
for $R>R_c$ and $0<\lambda\le 1$.
One can easily verify that
\begin{equation}\label{dandiao}
{\partial L\over \partial \rho}<0,\qquad
{\partial L\over \partial \lambda}>0,\qquad
{\partial\rho\over \partial  \lambda}=
-{\partial_{\lambda}L \over \partial_{\rho} L}>0.
\end{equation} 
  The steady tumor radius $R_{s,\lambda}$ is 
  the unique root of the following equation
\begin{equation*}
F(R,\lambda):={1\over R^3}\Big[\int_{\rho(R,\lambda)}^{R}\mu(\sigma(r,R,\lambda)-\tilde\sigma)r^2dr-\frac{\nu}{3}\rho^3(R,\lambda)\Big]=0.
\end{equation*}
 Similarly, by \cite{liu-zhuang} and (\ref{dandiao}) we have
 for $R>R_c$ and $0<\lambda<1$,
\begin{equation}\label{dandiao2}
 {\partial F\over \partial R}<0,\qquad {\partial F\over \partial \lambda}<0,
 \qquad
 {d R_{s,\lambda}\over d \lambda}=- {\partial_\lambda F
 \over\partial_R F}<0.
\end{equation}
  On the other hand,
 for the necrotic tumor model with
$f, h, g$ given by (\ref{fhg}) and $\lambda=0$, 
from \cite{cui-06}, we know that 
\begin{equation*}
  \sigma(r,R,0)=\left\{
  \begin{array}{l}
  \hat\sigma \qquad\mbox{for}\;\;0<r<\rho,
  \\ [0.4 cm]
 \displaystyle\frac{\hat\sigma}{r}\big[ \sinh(r-\rho)+\rho 
 \cosh(r-\rho)\big]\qquad\mbox{for}\;\; \rho<r<R.
 \end{array}
 \right.
\end{equation*}
where $\rho=\rho(R,0)$ satisfies
\begin{equation*}
  L(\rho,R,0):=\sinh(R-\rho)+\rho \cosh(R-\rho)-\frac{\bar\sigma}{\hat\sigma}R=0,
\end{equation*} 
  and $R_{s,0}$ is the unique root of the following equation:
\begin{equation*}
F(R, 0):={1\over R^3}\Big[\int_{\rho(R,0)}^{R}\mu(\sigma(r,R,0)-\tilde\sigma)r^2dr-\frac{\nu}{3}\rho^3(R,0)\Big]=0.
\end{equation*}  
  By a direct computation, we can verify that for $R>R_c$, 
\begin{equation*}
\lim_{\lambda \to 0^+}\rho(R, \lambda)=\rho(R,0),\quad
\lim_{\lambda \to 0^+}\sigma(r,R,\lambda)=\sigma(r,R,0),\quad
\lim_{\lambda \to 0^+} F(R,\lambda)=F(R,0).
\end{equation*}
  Hence from (\ref{dandiao2}) we obtain
 $$\displaystyle\lim_{\lambda \to 0^+}\rho_{s,\lambda}=\rho_{s,0}
 =\rho_{nec}\qquad {\rm and}\qquad
 \displaystyle \lim_{\lambda \to 0^+} R_{s,\lambda}=R_{s,0}=R_{nec}.$$
 From this, we observe that, in the linear case, the stationary solution of problem \eqref{1.1} converges
 to the stationary solution of problem \eqref{4.1}, as $\|h\|$ goes to $0$. 

\end{rem}

\vspace{0.25em}

\begin{ac*}
This research is supported by the National Natural Science Foundation of China under the Grant No. 12101260, 12171349 and 12271389.
\end{ac*}


{\small
}

\end{CJK}
\end{document}